\definecolor{darkblue}{rgb}{0,0,0.7}
\definecolor{darkgreen}{rgb}{0.01,0.75,0.24}
\def \Ee[#1]{\mathcal{E}^{\text{{#1}}}}
\def\R{\mathbb{R}}
\def\pa[#1,#2]{\frac{\partial {#1}}{\partial {#2}} }
\def\idom[#1,#2,#3]{\int_{#1}\hspace{1pt} {#2} \hspace{1pt} \text{d}{#3}}
\def\res[#1,#2]{\left.{#1}\right|_{#2}}
\def\var[#1,#2]{\langle \delta \mathcal{E}^{\text{{#1}}}({#2}),v\rangle}
\def\vars[#1,#2,#3]{\langle \delta^2\mathcal{E}^{\text{{#1}}}({#2})v,{#3}\rangle}
\def\vard[#1,#2,#3,#4]{\langle \delta\mathcal{E}^{\text{{#1}}}({#2})-\delta\mathcal{E}^{\text{{#3}}}({#4}),v\rangle}
\def\N{\mathbb{N}}
\DeclareMathOperator{\spann}{span}
\newcommand{\cO}{\mathcal{O}}
\newcommand{\dd}{ \,\textrm{d}}
\renewcommand{\Delta}{\triangle}
\newcommand{\bbR}{\mathbb R}
\DeclareMathOperator*{\argmax}{arg\,max}
\newcommand{\be}{\begin{equation}}
\newcommand{\en}{\end{equation}}
\newcommand{\ben}{\begin{equation*}}
\newcommand{\enn}{\end{equation*}}
\newcommand{\bea}{\begin{aligned}}
\newcommand{\ena}{\end{aligned}}
\def\ba#1\ena{\begin{align}#1\end{align}}
\def\ban#1\enan{\begin{align*}#1\end{align*}}
\newtheorem{prop}{Proposition}
\newtheorem{thm}{Theorem}
\newtheorem{rmk}{Remark}
\begin{document}

\author[P.~A. Guth] {Philipp A.~Guth}
\address{Institute of Mathematics, University of Mannheim, 68131 Mannheim, Germany}
\email{pguth@mail.uni-mannheim.de}

\author[C. Schillings] {Claudia Schillings}
\address{Institute of Mathematics, University of Mannheim, 68131 Mannheim, Germany}
\email{c.schillings@uni-mannheim.de}

\author[S. Weissmann] {Simon Weissmann}
\address{Institute of Mathematics, University of Mannheim, 68131 Mannheim, Germany}
\email{sweissma@mail.uni-mannheim.de}

\title{Ensemble Kalman filter for neural network based one-shot inversion}

\begin{abstract}
We study the use of novel techniques arising in machine learning for inverse problems. Our approach replaces the complex forward model by a neural network, which is trained simultaneously in a one-shot sense when estimating the unknown parameters from data, i.e.~the neural network is trained only for the unknown parameter. By establishing a link to the Bayesian approach to inverse problems, an algorithmic framework is developed which ensures the feasibility of the parameter estimate w.r.~to the forward model. We propose an efficient, derivative-free optimization method based on variants of the ensemble Kalman inversion. Numerical experiments show that the ensemble Kalman filter for neural network based one-shot inversion is a promising direction combining optimization and machine learning techniques for inverse problems.
\end{abstract}
\keywords{Inverse Problems, Ensemble Kalman Inversion, Neural Networks, One-shot Optimization}

\maketitle

\section{Introduction} 
Inverse problems arise in almost all areas of application, e.g.~biological problems, engineering and environmental systems. The integration of data can substantially reduce the uncertainty in predictions based on the model and is therefore indispensable in many applications. Advances in machine learning provide exciting potential to complement and enhance simulators for complex phenomena in the inverse setting. We propose a novel approach to inverse problems by approximating the forward problem with neural networks, i.e.~the computationally intense forward problem will be replaced by a neural network. However, the neural network when used as a surrogate model needs to be trained beforehand in order to guarantee good approximations of the forward problem for arbitrary inputs of the unknown coefficient. To reduce the costs associated with building the surrogate in the entire parameter space, we suggest to solve the inverse problem and train the neural network simultaneously in a one-shot fashion. From a computational point of view, this approach has the potential to reduce the overall costs significantly, as the neural network serves as a surrogate of the forward model only in the unknown parameter estimated from the data and not in the entire parameter space. The ensemble Kalman inversion (EKI) as a derivative-free optimizer is applied to the resulting optimization problem.   
\subsection{Background and literature overview}
The goal of computation is to solve the following inverse problem:  Recover the unknown parameter $u\in\mathcal X$ in an abstract model
\begin{equation}\label{eq:abstract_modela}
M(u,p) = 0
\end{equation}
from a finite number of observation of the state $p\in\mathcal V$ given by
\begin{equation}\label{eq:abstract_modelb}
O(p) = y\in\R^{n_y}\,,
\end{equation}
which might be subject to noise. We denote by $\mathcal X$, $\mathcal V$ and $\mathcal W$ Banach spaces. The operator $M:\mathcal X\times \mathcal V\to \mathcal W$ describes the underlying model, typically a PDE or ODE in the applications of interest. The variable $p$ denotes the state of the model and is defined on a domain $D\subset \mathbb R^d, \ d\in \mathbb N$. The operator $O:\mathcal V\to \R^{n_y}$ is the observation operator, mapping the state variables $p$ to the observations. 
Classical methods to inverse problems are based on an optimization approach of the (regularized) data misfit, see e.g.~\cite{EHN96, Kaltenbacher2008IterativeRM}. One-shot (all-at-once) approaches are well established in the context of PDE-constrained optimization (see e.g.~\cite{volker}) and have been recently also introduced in the inverse setting \cite{K16,K17}. The idea is to solve the underlying model equation simultaneously with the optimality conditions. This is in contrast to so called reduced or black-box methods, which formulate the minimization as an unconstrained optimization problem via the solution operator of \eqref{eq:abstract_modela}. The connection of the optimization approach to the Bayesian setting via the maximum a posteriori estimate is well established in the finite-dimensional setting \cite{KS10} as well as in the infinite-dimensional setting for certain prior classes, see \cite{Agapiou_2018, Clason, Dashti_2013, Helin_2015}. We refer to \cite{KS10, AMS10} for more details on the Bayesian approach to inverse problems. Recently, data-driven approaches have been introduced to the inverse setting to reduce the computational complexity in case of highly complex forward models and to improve models in case of limited understanding of the underlying process \cite{actaip}. Neural networks in the inverse framework have been successfully applied in the case of parametric holomorphic forward models \cite{HSZ20} and in the case of limited knowledge of the underlying model \cite{RPK17, RPK17_2, 2019JCoPh.378..686R, SDK2020, YMK20}. To train neural networks, gradient-based methods are typically used \cite{Goodfellow-et-al-2016}. The ensemble Kalman inversion (EKI) (see e.g.~\cite{ILS13, SS17,SS17b}) has been recently applied as gradient-free optimizer for the training of neural networks in \cite{HLR18,KS19}. 

\subsection{Our contribution}
The goal of this work is two-fold:
\begin{itemize}
\item We formulate the inverse problem in a one-shot fashion and establish the connection to the Bayesian setting. In particular, the Bayesian viewpoint allows to incorporate model uncertainty and gives a natural way to define the regularization parameters. In case of an exact forward model, the vanishing noise can be interpreted as a penalty method. This allows to establish convergence results of the one-shot formulation as an unconstrained optimization problem to the corresponding (regularized) solution of the reduced optimization problem (with exact forward model). The (numerical approximation of the) forward problem is replaced by a neural network in the one-shot formulation, i.e.~the neural network does not have to be trained in advance for all potential  parameters.
\item Secondly, we show that the EKI can be effectively used to solve the resulting optimization problem. We provide a convergence analysis in the linear setting. To enhance the performance, we modify the algorithm motivated by the continuous version of the EKI. Numerical experiments demonstrate the robustness (also in the nonlinear setting) of the proposed algorithm.
\end{itemize}
\subsection{Outline}
In section \ref{sec:pf}, we discuss the optimization and Bayesian approach to inverse problems, introduce the one-shot formulation and establish the connection to vanishing noise and penalty methods in case of exact forward models. Section \ref{sec:nn} gives an overview of neural networks used to approximate the forward problem. The EKI as a derivative-free optimizer for the one-shot formulation is discussed and analyzed in section \ref{sec:EKI}. Finally, numerical experiments demonstrating the effectiveness of the \textcolor{black}{proposed} approach are presented in section \ref{sec:ne}. We conclude in section \ref{sec:concl} with an outlook to future work.

\subsection{Notation}
We denote by $\| \cdot \|$ the Euclidean norm and by $\langle \cdot,\cdot\rangle$ the corresponding inner product. For a given symmetric, positive definite matrix (s.p.d.) $A$, the weighted norm $\|\cdot\|_A$ is defined by $\|\cdot\|_A=\|A^{-1/2}\cdot\|$ and the weighted inner product by $\langle \cdot,\cdot\rangle_A=\langle \cdot,A^{-1}\cdot\rangle$.

\section{Problem Formulation}\label{sec:pf}
 We introduce in the following the optimization and Bayesian approach to an abstract inverse problem of the form \eqref{eq:abstract_modela}-\eqref{eq:abstract_modelb}. 

Throughout this paper, we derive the methods and theoretical results under the assumption that $\mathcal X, \mathcal W$ and $\mathcal V$ are finite-dimensional, i.e.~we assume that the forward problem $M(u,p)=0$ has been discretized by a suitable numerical scheme and the parameter space is finite-dimensional as well, possibly after dimension truncation. Though most of the ideas and results can be generalized to  the infinite-dimensional setting, we avoid the technicalities  arising from the infinite-dimensional setting and focus on the discretized problem, i.e.~$\mathcal X=\mathbb R^{n_u}$, $\mathcal V=\mathbb R^{n_p}$ and $\mathcal W=\mathbb R^{n_w}$.

\subsection{Optimization approach to the inverse problem}
The optimization approach leads to the following problem
\begin{align} 
&\min_{u,p}\ \|O(p)- y\|_{\Gamma_{obs}}^2\label{eq:optipa}\\
&\quad\mbox{s.t. } M(u,p)=0\,,\label{eq:optipb}
\end{align}
i.e.~we minimize the data misfit in a suitable weighted norm with $\Gamma_{obs}\in\mathbb R^{n_y\times n_y}$ s.p.d., given that the forward model is satisfied.
Due to the ill-conditioning of the inverse problem, a regularization term on the unknown parameters is often introduced in order to stabilize the optimization, i.e.~we consider
\begin{align} 
&\min_{u,p}\|O(p)- y\|_{\Gamma_{obs}}^2 +  \alpha_1\mathcal R_1(u)\label{eq:optiprega}\\
&\quad\mbox{s.t. } M(u,p)=0\,,\label{eq:optipregb}
\end{align}
where the regularization is denoted by $\mathcal R_1:\mathcal X\to \mathbb R$ and the positive scalar $\alpha_1>0$ is usually chosen according to prior knowledge on the unknown parameter $u$. We will comment on the motivation of the regularization via the Bayesian approach in the following section. To do so, we first introduce the so-called reduced problem of \eqref{eq:optipa}-\eqref{eq:optipb} and \eqref{eq:optiprega}-\eqref{eq:optipregb}, resepectively. The forward model $M(u,p)=0$ is typically a well-posed problem, in the sense that for each parameter $u\in \mathcal X$, there exists a unique state $p \in \mathcal V$ such that $M(u,p)=0$ in $\mathcal W$. Introducing the solution operator $S:\mathcal X \to \mathcal V$ s.t.~$M(u,S(u))=0$, we can reformulate the optimization problem \eqref{eq:optipa}-\eqref{eq:optipb} as an unconstrained optimization problem
\begin{equation} \label{eq:optipred}
\min_{u \in \mathcal X}\ \|O(S(u))- y\|_{\Gamma_{obs}}^2\,,
\end{equation}
and
\begin{equation} \label{eq:optipredreg}
\min_{u \in \mathcal X}\ \|O(S(u))- y\|_{\Gamma_{obs}}^2+  \alpha_1\mathcal R_1(u)\,,
\end{equation}
respectively.

\subsection{Bayesian approach to the inverse problem}
Adopting the Bayesian approach to inverse problems, we view the unknown parameters $u$ as an $\mathcal X$-valued random variable with prior distribution $\mu_0$. The noise in the observations is assumed to be additive and described by a random variable $\eta\sim\mathcal N(0,\Gamma_{obs})$ with $\Gamma_{obs}\in\mathbb R^{n_y\times n_y}$ s.p.d, i.e.
\begin{equation}
y = O(S(u))+\eta\,,
\end{equation}
Further, we assume that the noise $\eta$ is stochastically independent of $u$. By Bayes' theorem, we obtain the posterior distribution
\begin{equation}\label{eq:posterior}
\mu^\ast(\mathrm du)\propto \exp(-\frac12\| O(S(u))-y\|_{\Gamma_{obs}}^2)\mu_0(\mathrm du)\,,
\end{equation}
 the conditional distribution of the unknown given the observation $y$.

\subsection{Connection between the optimization and the Bayesian approach}
The optimization approach leads to a point estimate of the unknown parameters, whereas the Bayesian approach computes the conditional distribution of the unknown parameters given the data, the posterior distribution, as solution of the inverse problem. Since the computation or approximation of the posterior distribution is prohibitively expensive in many applications, one often restores to point estimates such as the maximum a posteriori (MAP) estimate, the most likely point of the unknown parameters. Denoting by $\rho_0$ the Lebesgue density of the prior distribution, the MAP estimate is defined as
\begin{equation}\label{eq:MAPmax}
\argmax_{u\in\mathcal X}\ \exp\left(-\frac12\| O(S(u))-y\|_{\Gamma_{obs}}^2\right) \rho_0(u)\,.
\end{equation} 
Assuming a Gaussian prior distribution, i.e.~$\mu_0=\mathcal N(u_0,C)$, the MAP estimate is given by the solution of the following minimization problem
\begin{equation}\label{eq:MAP}
\min_{u}\ \frac12\| O(S(u))-y\|_{\Gamma_{obs}}^2 +\frac12\|u-u_0\|_C^2\,.
\end{equation}
The Gaussian prior assumption leads to a Tikhonov-type regularization in the objective function, whereas the first term in the objective function results from the Gaussian assumption on the noise. We refer the interested reader to \cite{DS17, KS10} for more details on the MAP estimate.
 
 \subsection{One-shot formulation for inverse problems}
Black-box methods are based on the reduced formulation \eqref{eq:optipred} assuming that the forward problem can be solved exactly in each iteration. Here, we will follow a different approach, which simultaneously solves the forward and optimization problem. Various names for the simultaneous solution of the design and state equation exist: one-shot method, all-at-once, piggy-back iterations etc.. We refer the reader to \cite{volker} and the references therein for more details.

Following the one-shot ideas, we seek to solve the problem
\begin{equation}\label{eq:a-a-o_forward_map}
F(u,p) = \begin{pmatrix}
M(u,p)\\ O(p)
\end{pmatrix} = \begin{pmatrix}
0\\ y
\end{pmatrix}=:\tilde y\,,
\end{equation}
Due to the noise in the observations, we rather consider
\begin{equation}\label{eq:noiseobs}
y=O(p)+\eta_{obs}
\end{equation}
with normally distributed noise $\eta_{obs}\sim\mathcal N(0,\Gamma_{obs})$, $\Gamma_{obs}\in \mathbb R^{n_y\times n_y}$ s.p.d.. Similarly, we assume that 
\begin{equation}
0=M(u,p)+\eta_{model}\,,
\end{equation}
i.e.~we assume that the model error can be described by  $\eta_{model}\sim\mathcal N(0,\Gamma_{model})$, $\Gamma_{model}\in \mathbb R^{n_w\times n_w}$ s.p.d.. Thus, we obtain the problem
 \begin{equation}\label{eq:a-a-o}
\tilde y=F(u,p) + \begin{pmatrix}
\eta_{model}\\ \eta_{obs}
\end{pmatrix}\,.
\end{equation}
The MAP estimate is then computed by the solution of the following minimization problem
\begin{equation}\label{eq:optios}
\min_{u,p}\frac12\|F(u,p)-\tilde y\|_\Gamma^2 + \alpha_1\mathcal R_1(u)+ \alpha_2\mathcal R_2(p),
\end{equation}
where $\mathcal R_1:\mathcal X\to\R$ and $\mathcal R_2:\mathcal V\to\R$ are regularizations of the parameter $u \in \mathcal X$ and the state $p \in \mathcal V$, $\alpha_1,\alpha_2>0$ and $\Gamma=\begin{pmatrix}\Gamma_{model} & 0 \\0& \Gamma_{obs}\end{pmatrix}\in\mathbb R^{(n_w+n_y) \times (n_w+n_y)}$. 
\textcolor{black}{
\begin{rmk}
Note that the proposed approach does not rely on a Gaussian noise model for the forward problem, i.e. non-Gaussian models can be straightforwardly incorporated. Then, the Bayesian viewpoint may guide the choice of the regularization parameter (or function) in the optimization approach via the MAP estimate. The model error is typically estimated from experimental data or more complex models, cp. \cite{OHagan, Higdon}. We focus here on the Gaussian setting, as the one-shot approach for inverse problems is typically formulated in a least-squares fashion (in particular, when using neural networks as surrogate models of the forward problem \cite{RPK17, RPK17_2}). The focus of this work will be on the development of a methodology, which allows to satisfy the forward problem exactly. This will be achieved by establishing the connection to the Bayesian setting and working in the vanishing noise setting.
\end{rmk}
}

\subsection{Vanishing noise and penalty methods}\label{subsec:vanishnoise}
In case of an exact forward model, i.e.~in case the forward equation is supposed to be satisfied exactly with $M(u,p)=0$, this can be modeled in the Bayesian setting by vanishing noise. In order to illustrate this idea, we consider a parametrized noise covariance model $\Gamma_{model}=\gamma \hat\Gamma_{model}$ for $\gamma\in\mathbb R_{+}$ and given s.p.d. matrix $\hat\Gamma_{model}$. The limit for $\gamma\to 0$ corresponds to the vanishing noise setting {and can be interpret as reducing the uncertainty in our model}. The MAP estimate in the one-shot framework is thus given by
\begin{equation}\label{eq:penalty}
\min_{u,p}\frac12\|O(p)-y\|_{\Gamma_{obs}}^2 +\frac{\lambda}{2} \|M(u,p)\|_{\hat\Gamma_{model}}^2 +  \alpha_1\mathcal R_1(u)+ \alpha_2\mathcal R_2(p)
\end{equation}
with $\lambda=1/\gamma$. This form reveals the close connection to penalty methods, which attempt to solve constrained optimization problems such as \eqref{eq:optipa}-\eqref{eq:optipb} by sequentially solving unconstrained optimization problems of the form \eqref{eq:penalty} for a sequence of monotonically increasing penalty parameters $\lambda$.
The following well-known result on the convergence of the resulting algorithm can be found e.g.~in \cite{bertsekas}.
\begin{prop}\label{prop:pen}
Let the observation operator $O$, the forward model $M$ and the regularization functions $\mathcal R_1$, $\mathcal R_2$ be continuous and the feasible set $\{(u,p)|M(u,p)=0\}$ be nonempty. For $k=0,1,\ldots$ let $(u_k,p_k)$ denote a global minimizer of
\begin{equation}\label{eq:penaltyseq}
\min_{u,p}\ \frac12\|O(p)-y\|_{\Gamma_{obs}}^2 +\frac{\lambda_k}{2} \|M(u,p)\|_{\hat\Gamma_{model}}^2 +  \alpha_1\mathcal R_1(u)+ \alpha_2\mathcal R_2(p)
\end{equation}
with $(\lambda_k)_{k\in\mathbb N}\subset \mathbb R_+$ strictly monotonically increasing and $\lambda_k\to\infty$ for $k\to\infty$.
Then every accumulation point of the sequence $(u_k,p_k)$ is a global minimizer of
\begin{eqnarray}\label{eq:penaltyorig}
&\min\limits_{u,p}&\frac12\|O(p)-y\|_{\Gamma_{obs}}^2 +  \alpha_1\mathcal R_1(u)+ \alpha_2\mathcal R_2(p)\\
&\mbox{s.t. }& M(u,p)=0\,.
\end{eqnarray}
\end{prop}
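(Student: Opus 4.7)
The plan is to follow the classical penalty method argument (see \cite{bertsekas}). For brevity introduce the shorthand
\[
J(u,p) := \tfrac12\|O(p)-y\|_{\Gamma_{obs}}^2 + \alpha_1\mathcal R_1(u) + \alpha_2\mathcal R_2(p), \qquad P(u,p) := \tfrac12\|M(u,p)\|_{\hat\Gamma_{model}}^2,
\]
so that the penalized objective in \eqref{eq:penaltyseq} reads $Q_k(u,p) = J(u,p) + \lambda_k P(u,p)$. Pick any feasible $(u',p')$, which exists by hypothesis; then $Q_k(u',p')=J(u',p')$, and the defining property of $(u_k,p_k)$ as a global minimizer of $Q_k$ yields the single inequality
\[
J(u_k,p_k) + \lambda_k P(u_k,p_k) \le J(u',p'),
\]
from which the entire proposition will be extracted.

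For the first step (feasibility of accumulation points), let $(\bar u,\bar p)$ be an accumulation point with $(u_{k_j},p_{k_j})\to(\bar u,\bar p)$. Rearranging the key inequality along the subsequence gives
\[
\lambda_{k_j} P(u_{k_j},p_{k_j}) \le J(u',p') - J(u_{k_j},p_{k_j}).
\]
The right-hand side is bounded as $j\to\infty$ since $J$ is continuous and $(u_{k_j},p_{k_j})$ converges; because $\lambda_{k_j}\to\infty$ and $P\ge 0$, I conclude $P(u_{k_j},p_{k_j})\to 0$. Continuity of $M$, hence of $P$, then gives $P(\bar u,\bar p)=0$, i.e. $M(\bar u,\bar p)=0$.

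For the second step (optimality) I discard the nonnegative penalty and pass to the limit in the same inequality, using continuity of $J$:
\[
J(\bar u,\bar p) \;=\; \lim_{j\to\infty} J(u_{k_j},p_{k_j}) \;\le\; J(u',p').
\]
Since $(u',p')$ was an arbitrary feasible point, $(\bar u,\bar p)$ attains the infimum of $J$ over the feasible set; combined with its feasibility from the previous step, this shows $(\bar u,\bar p)$ is a global minimizer of \eqref{eq:penaltyorig}.

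There is essentially no obstacle beyond making the continuity bookkeeping explicit. The only mild subtlety is that the boundedness of $J(u_{k_j},p_{k_j})$ is extracted from continuity along a \emph{convergent} subsequence rather than from a uniform bound on the whole sequence; since the proposition only asserts a property of accumulation points (whose existence is not claimed), no additional coercivity or compactness hypothesis on $\mathcal R_1,\mathcal R_2$ is needed.
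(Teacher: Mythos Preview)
Your argument is correct and is precisely the classical penalty-method proof that the paper defers to by citing \cite{bertsekas} rather than proving in-text. There is nothing to add: the paper states the proposition as a well-known result without supplying its own proof, and your write-up matches the standard argument.
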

This classic convergence result ensures the feasibility of the estimates, i.e.~the proposed approach is able to incorporate and exactly satisfy physical constraints in the limit. We mention also the possibility to consider exact penalty terms in the objective, corresponding to different noise models in the Bayesian setting. This will be subject to future work.

This setting will be the starting point of the incorporation of neural networks into the problem. Instead of minimizing with respect to the state $p$, we will approximate the solution of the forward problem $p$ by a neural network $p_{\theta}$, where $\theta$ denote the parameters of the neural network to be learned within this framework. Thus, we obtain the corresponding minimization problem 
\begin{equation}\label{eq:min_IP_NN}
\min_{u,\theta}\ \frac12\|F(u,p_\theta)-\tilde y\|_\Gamma^2 + \alpha_1\mathcal R_1(u)+\alpha_2\mathcal R_2(p_\theta,\theta)\,,
\end{equation}
where $p_\theta$ denotes the state approximated by the neural network. 

\section{Neural Networks }\label{sec:nn}
Following, e.g.~\cite{BEG19, OPS19}, we distinguish between the neural network architecture or neural network parameters as a set of weights and biases and the neural network, which is the associated mapping when an activation function is applied to the affine linear transformations defined by the neural network parameters:

The {neural network architecture} or the {neural network parameters} $\theta$ with input dimension $d \in \N$ and $L \in \N$ layers is a sequence of matrix-vector tuples
\begin{align*}
\theta &= \big((W_{\ell},b_{\ell})\big)_{\ell = 1}^L = \big(W_1,b_1),(W_2,b_2),\ldots,(W_L,b_L)\big) \in \Theta\,,
\end{align*}
where $\Theta := \times_{\ell=1}^{L}(\R^{N_\ell \times N_{\ell-1}}\times \R^{N_\ell}) \cong \mathbb R^{n_\theta}$, with $n_\theta=\sum_{\ell = 1}^L (N_{\ell-1}+1)\,N_{\ell}$. Hence, the number of neurons in layer $\ell$ is given by $N_{\ell} \in \N$ and $N_0 := d$, i.e.~$W_{\ell} \in \R^{N_{\ell}\times N_{\ell-1}}$ and $b_{\ell} \in \R^{N_{\ell}}$ for $\ell = 1,\ldots,L$.
Given an activation function $\varrho: \R \to \R$, we call the mapping 
\begin{align*}
p^{\varrho}_\theta:\,&\bbR^d \to \bbR^{N_L}\,\\
&x \mapsto p^{\varrho}_\theta(x):=x_L\,,
\end{align*}
defined by the recursion
\begin{align*}
x_0 &:= x\,,\\
x_{\ell} &:= \varrho(W_{\ell}x_{\ell-1} + b_{\ell})\,, \quad \text{for } \ell = 1,\ldots,L-1\,,\\
x_L &:= W_{L}x_{L-1} + b_{L}\,,
\end{align*}
a {neural network} $p_\theta^\varrho$ with activation function $\varrho$, where $\varrho$ is understood to act component-wise on vector-valued inputs, i.e.~$\varrho(z) := \big(\varrho(z_1),\ldots,\varrho(z_n)\big)$ for $z \in \R^n$. 
In the following the activation function is always the sigmoid function $\varrho = \frac{1}{1 + e^{-x}}$. We will therefore waive the superscript indicating the activation function, i.e.~we abbreviate $p_\theta^\varrho = p_\theta$. 

This class of neural networks is often called feed-forward deep neural networks (DNNs) and have recently found success in forward \cite{SZ19} as well as Bayesian inverse \cite{HSZ20} problems in the field of uncertainty quantification. 

Based on the approximation results of polynomials by feed-forward DNNs \cite{Y17}, the authors in \cite{SZ19} derive bounds on the expression rate for \textcolor{black}{multivariate}, real-valued functions depending holomorphically on a sequence $z = (z_j)_{j\in \mathbb N}$ of parameters. More specifically, the authors consider functions that admit sparse Taylor \textcolor{black}{generalized polynomial chaos (gpc)} expansions, i.e.~$s$-summable Taylor gpc coefficients. Such functions arise as response surfaces of parametric PDEs, or in a more general setting from parametric operator equations, see e.g.~\cite{S12} and the references therein. Their main results is that these functions can be expressed with arbitrary accuracy $\delta > 0$ (uniform w.r.~to $z$) by DNNs of size bounded by $C\delta^{-s/(1-s)}$ with a constant $C > 0$ independent of the dimension of the input data $z$. Similar results for parametric PDEs can be found in \cite{KPRS19}.

The methods in \cite{SZ19} motivated the work of \cite{HSZ20} in which the authors show holomorphy of the data-to-QoI map $y \mapsto \mathbb{E}^{\mu^*}[\text{QoI}]$\textcolor{black}{, which relates observation data to the posterior expectation of an unknown quantity of interest (QoI),} for additive, centered Gaussian observation noise in Bayesian inverse problems. Using the fact that holomorphy implies fast convergence of Taylor expansions, the authors derived an exponential expression rate bound in terms of the overall network size. 

Our approach differs from the ideas above as we do not want to approximate the data-to-QoI map, but instead emulate the state $p$ itself by a DNN. Hence, in our method the input of the neural network is a point in the domain of the state, $x\in D$. The output of the neural network is \textcolor{black}{an approximation of} the state at this point, $p_\theta(x) \in \R$, i.e.~$N_L = 1$. 
\textcolor{black}{By a slight abuse of notation we denote by $p_\theta \in \mathcal{V} =\R^{n_p}$ also a vector containing evaluations of the neural network at the $n_p$-many grid points of the state.}
In combination with a one-shot approach for the training of the neural network parameters, our method is closer related to the physics-informed neural networks (PINNs) in \cite{RPK17, RPK17_2}.

In \cite{RPK17, RPK17_2} the authors consider PDEs of the form
\begin{align*}
    f(t,x) := p_t + N(p,\lambda) = 0, \quad t \in [0,T],\,x\in D\,,
\end{align*}
where $N$ is a nonlinear differential operator parametrized by $\lambda$. The authors replace $p$ by a neural network $p_\theta$ and use automatic differentiation to construct the function $f_\theta(t,x)$. The neural network parameters are then obtained by minimizing $\text{MSE} = \text{MSE}_p + \text{MSE}_f$, where 
\begin{align*}
\text{MSE}_p := \frac{1}{N_p} \sum_{i=1}^{N_p} |p_{\textcolor{black}{\theta}}(t_p^i,x_p^i) - p^i|^2\,, \quad \text{MSE}_f := \frac{1}{N_f} \sum_{i=1}^{N_f} |f_{\textcolor{black}{\theta}}(t_f^i,x_f^i)|^2\,,
\end{align*}
\textcolor{black}{and $\{t_p^i,x_p^i,p^i\}_{i=1}^{N_p}$ denote the training data and $\{t_f^i,x_f^i\}_{i=1}^{N_f}$ are collocation points of $f_\theta(t,x)$.}
For the minimization a L-BFGS method is used. The parameters $\lambda$ of the differential operator turn into parameters of the neural network $f_\theta$ and can be learned by minimizing the MSE.

In \cite{YMK20} the authors consider so called Bayesian neural networks (BNNs), where the neural network parameters are updated according to Bayes' theorem. Hereby the initial distribution on the network parameters serves as prior distribution. The likelihood requires the PDE solution, which is obtained by concatenating the Bayesian neural network with a physics-informed neural network, which they call Bayesian physics-informed neural networks (B-PINNs). For the estimation of the posterior distributions they use the Hamiltonian Monte Carlo method and variational inference. In contrast to the PINNs, the Bayesian framework allows them to quantify the aleatoric uncertainty associated with noisy data. In addition to that, their numerical experiments indicate that B-PINNs beat PINNs in case of large noise levels on the observations.
In contrast to that, our proposed method is based on the MAP estimate and remains exact in the small noise limit. We propose a derivative-free optimization method, the EKI, which shows promising results (also compared to quasi-Newton methods) without requiring derivatives w.r.~to the weights and design parameters.


\section{Ensemble Kalman Filter for Neural Network Based One-shot Inversion}\label{sec:EKI}

%

The ensemble Kalman inversion (EKI) generalizes the well-known ensemble \linebreak[4]Kalman Filter (EnKF) introduced by Evensen and coworker in the data assimilation context \cite{GE03} to the inverse setting, see \cite{ILS13} for more details. Since the Kalman filter involves a Gaussian approximation of the underlying posterior distribution, we focus on an iterative version based on tempering in order to reduce the linearization error. Recall the posterior distribution $\mu^\ast$ given by
\begin{equation*}
\mu^\ast(\mathrm dv)\propto \exp(-\frac12\| G(v)-y\|_{\Gamma}^2)\mu_0(\mathrm dv)\,.
\end{equation*}
for an abstract inverse problem 
\begin{equation*}
y=G(v)+\eta
\end{equation*}
with $G$ mapping from the unknowns $v\in \mathbb R^{n_v}$ to the observations $y\in\mathbb R^{n_y}$ with $\eta\sim\mathcal N(0,\Gamma)$, $\Gamma\in \mathbb R^{n_y\times n_y}$ s.p.d.. We define the intermediate measures
\begin{equation}\label{eq:SMCposterior}
\mu_n(\mathrm dv)\propto \exp(-\frac12 nh\| G(v)-y\|_{\Gamma}^2)\mu_0(\mathrm dv)\, \quad n=0,\ldots,N
\end{equation}
by scaling the data misfit / likelihood by the step size $h=N^{-1}$, $N\in\mathbb N$. The idea is to evolve the prior distribution $\mu_0$ into the posterior distribution $\mu_N=\mu^\ast$ by this sequence of intermediate measures and to apply the EnKF to the resulting artificial time dynamical system. Note that we account for the repeated use of the observations by amplifying the noise variance by $N=1/h$ in each step. The EKI then uses an ensemble of $J$ particles $\{v^{(j)}_0\}_{j=1}^J$ with $J\in \mathbb N$ to approximate the intermediate measures $\mu_n$ by
\begin{equation}
  {\mu_{n}}\simeq \frac 1J \sum_{j=1}^J \delta_{v_n}^{(j)}
  \end{equation}
  with $\delta_{v}$ denoting the \textcolor{black}{Dirac measure centered on} $v_n^{(j)}$. The particles are transformed in each iteration by the application of the Kalman update formulas to the empirical mean $\bar v_n=\frac1J\sum_{j=1}^Jv_n^{(j)}$ and covariance $C(v_n)=\frac{1}{J-1}\sum_{j=1}^J (v_n^{(j)}-\bar v_n)\otimes (v_n^{(j)}-\bar v_n)$ in the form
  \begin{equation}
\bar v_{n+1}=\bar v_{n}+K_n(y-  G(\bar v_n)))\,, \qquad C(v_{n+1})=C(v_n)-K_nC^{y,v}(v_n)\,,
  \end{equation}
  where $K_n=C^{v,y}(\textcolor{black}{v_n})(C^{y,y}(\textcolor{black}{v_n})+\frac 1h\Gamma)^{-1}$ denotes the Kalman gain
  and, for $v=\{v^{(j)}\}_{j=1}^J$, the operators 
$C^{yy}$ and $C^{vy}$ given by
 \begin{align}
  \qquad C^{y,y}(v)&=\frac{1}{J}\sum_{j=1}^J\bigl( G(v^{(j)})-\bar G\bigr)
\otimes\bigl(G(v^{(j)})-\bar G\bigr), \label{eq:sa}\\
  \qquad C^{v,y}(v)&=\frac{1}{J}\sum_{j=1}^J\bigl(v^{(j)}-\bar v\bigr)
\otimes\bigl(G(v^{(j)})-\bar G\bigr), \label{eq:sb}\\
\qquad C^{y,v}(v)&=\frac{1}{J}\sum_{j=1}^J\bigl(G(v^{(j)})-\bar G\bigr)
\otimes\bigl(v^{(j)}-\bar v\bigr), \label{eq:sc}\\
\bar G&=\frac1J\sum_{j=1}^J
 G(v^{(j)})\label{eq:sd}
  \end{align}
are the empirical covariances and empirical mean in the observation space. Since this update does not uniquely define the transformation of each particle $v_n^{(j)}$ to the next iteration $v_{n+1}^{(j)}$, the specific choice of the transformation leads to different variants of the EKI. We will focus here on the generalization of the EnKF as introduced by \cite{ILS13} resulting in a mapping of the particles of the form
   \begin{equation} \label{eq:EnKFiter}
 v_{n+1}^{(j)}=v_{n}^{(j)}+C^{v,y}(v_n)(C^{y,y}(v_n)+h^{-1}\Gamma)^{-1}
\bigl(y_{n+1}^{(j)}-G(v_n^{(j)})\bigr), \quad j=1,\cdots, J,
 \end{equation}
 where
$$y_{n+1}^{(j)}=y+\xi^{(j)}_{n+1}$$
The $\xi^{(j)}_{n+1}$ are i.i.d.~random variables
distributed according to $\mathcal N(0,h^{-1}\Sigma)$
with $\Sigma=\Gamma$ corresponding to the case of perturbed observations
and $\Sigma=0$ {to the unperturbed observations}.

The motivation via the sequence of intermediate measures and the resulting artificial time allows to derive the continuous time limit of the iteration, which has been extensively studied in \cite{BSWW19, SS17, SS17b} to build analysis of the EKI in the linear setting. 
This limit arises by taking the parameter $h$ in \eqref{eq:EnKFiter} to zero resulting in
 \begin{eqnarray}\label{eq:ode}
   \frac{\dd v^{(j)}}{\dd t}&=&C^{v,y}(v)\Gamma^{-1}
(y-G(v^{(j)}))+C^{v,y}(v^{(j)})\Gamma^{-1}\sqrt{\Sigma} \,
\frac{dW^{(j)}}{dt}.
\end{eqnarray}
As shown in \cite{doi:10.1137/140981319}, the EKI does not in general converge to the true posterior distribution. Therefore, the analysis presented in \cite{BSWW19, SS17, SS17b} views the EKI as a derivative-free optimizer of the data misfit, which is also the viewpoint we adopt here.

\subsubsection{Ensemble Kalman inversion for neural network based one-shot formulation}

By approximating the state of the underlying PDE by a neural network, we seek to optimize the unknown parameter $u$ and on the other side the parameters of the neural network $\theta$. The idea is based on defining the function $H(v):=H(u,\theta)= F(u,p_{\theta})$, where $p_\theta$ denotes the state approximated by the neural network and $v=(u,\theta)^\top$. This leads to the empirical summary statistics
\begin{equation*}
\overline{(u,\theta)}_n = \frac1J\sum_{j=1}^J (u_n^{(j)},\theta_n^{(j)}),\quad \bar{H}_n = \frac{1}{J}\sum^{J}_{j=1}H(u_n^{(j)},\theta_n^{(j)}), 
\end{equation*}
\begin{align*}
C^{u\theta,y}_{n} &= \frac{1}{J}\sum^{J}_{j=1} \bigl((u_n^{(j)},\theta_n^{(j)})^\top - \overline{(u,\theta)}^\top_n\bigr)\otimes ({H}\bigl(u_n^{(j)},\theta_n^{(j)}) - \bar{{H}}_n\bigr), \\ C^{y,y}_{n} &= \frac{1}{J}\sum^{J}_{j=1}
\bigl({H}(u_n^{(j)},\theta_n^{(j)}) - \bar{{H}}_n\bigr)
\otimes   \bigl({H}(u_n^{(j)},\theta_n^{(j)}) - \bar{{H}}_n\bigr),
\end{align*}
and the EKI update 
\begin{equation}
\label{eq:EKI_PINNS}
(u_{n+1}^{(j)},\theta_{n+1}^{(j)})^\top= (u_n^{(j)},\theta_n^{(j)})^\top + C^{u\theta,y}_{n} \big(C^{y,y}_n +  h^{-1}\Gamma\big)^{-1}\big(\tilde y^{(j)}_{n+1} - {H}(u^{(j)}_n,\theta_n^{(j)})\big),
\end{equation}
where the perturbed observation are computed as before
\begin{equation}
\label{eq:dataU_PINNS}
\tilde y_{n+1}^{(j)}=\tilde y+\xi^{(j)}_{n+1}, \quad \xi^{(j)}_{n+1} \sim 
\mathcal N(0,h^{-1}\Sigma),
\end{equation}
with $$\tilde y = \begin{pmatrix}
0 \\ y
\end{pmatrix}, \quad \Gamma := \begin{pmatrix}
\Gamma_{model} & 0\\ 0 & \Gamma_{obs}
\end{pmatrix}.$$

{Figure \ref{fig:EKI_PINNs} illustrates the basic idea of the application of the EKI to solve the neural network based one-shot formulation.}

\begin{figure}[!htb]
	\centering
	\includegraphics[width=1\textwidth]{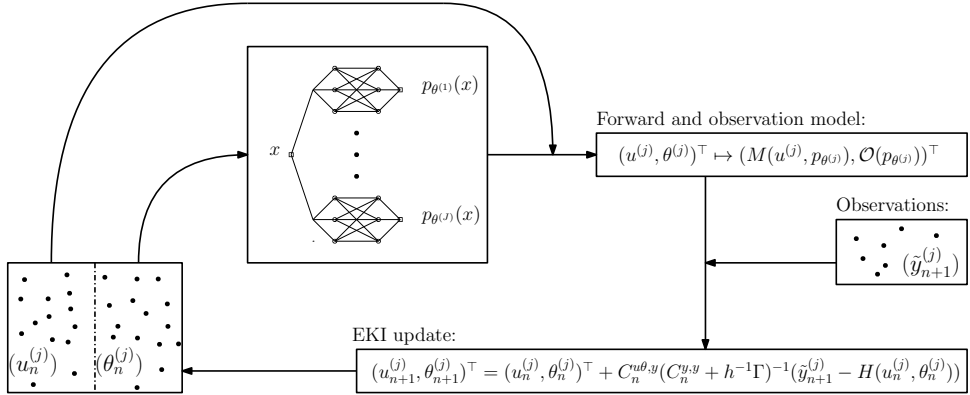}
    \caption{Description of the EKI applied to solve the neural network based one-shot formulation.}\label{fig:EKI_PINNs}
\end{figure}

The EKI \eqref{eq:EKI_PINNS} will be used as a derivative-free optimizer of the data misfit
$
\|F(u,p_{\theta})-\tilde y\|_\Gamma^2\,.
$
The analysis presented in \cite{BSWW19, SS17, SS17b} shows that the EKI in its continuous form is able to recover the data with a finite number of particles in the limit $t\to \infty$ under suitable assumptions on the forward problem and the set of particles. In particular, the analysis assumes a linear forward problem. Extensions to the nonlinear setting can be found e.g.~in \cite{chada2019convergence}. The limit $t\to \infty$ corresponds to the noise-free setting, as the inverse noise covariance scales with $n/N=nh$ in \eqref{eq:SMCposterior}. To explore the scaling of the noise and to discuss regularization techniques, we illustrate the ideas in the following for a Gaussian linear setting, i.e.~we assume that the forward response operator is linear $H(v)=Av$ with $A\in\mathcal L(\mathcal X \times {\Theta}, \mathbb R^{n_w+n_y})$ and $\mu_0=\mathcal N(v_0,C_0)$. Considering the large ensemble size limit $J\to \infty$, the mean $m$ and covariance $C$ satisfy the equations
 \begin{eqnarray}\label{eq:odelinearmC}
   \frac{\dd m(t)}{\dd t}&=&-C(t)A^\top \Gamma^{-1} (Am(t)-y)\\
    \frac{\dd C}{\dd t}&=&-C(t)A^\top \Gamma^{-1} A C(t)
\end{eqnarray}
for $\Sigma=\Gamma$ in \eqref{eq:EnKFiter}. By considering the dynamics of the inverse covariance, it is straightforward to show that the solution is given by
 \begin{eqnarray}\label{eq:odelinearCinv}
      C^{-1}(t)&=&C_0^{-1}+A^\top\Gamma^{-1}At\,,
\end{eqnarray}
see e.g.~\cite{garbunoinigo2019interacting} and the references therein for details.
Note that $C(1)$ corresponds to the posterior covariance and that $C(t)\to 0$ for $t\to\infty$. Furthermore, the mean is given by
 \begin{eqnarray}\label{eq:odelinear_mean}
      m(t)&=&(C_0^{-1}+A^\top\Gamma^{-1}At)^{-1}(A^\top \Gamma^{-1}yt+C_0^{-1}v_0)\,,
\end{eqnarray}
in particular the mean minimizes the data misfit in the limit $t\to \infty$. The application of the EKI in the inverse setting therefore often requires additional techniques such as adaptive stopping \cite{SS17b} or additional regularization \cite{CST19} to overcome the ill-posedness of the minimization problem. 
To control the regularization of the data misfit and neural network individually, we consider the following system
 \begin{align}
F(u,p_\theta) + \begin{pmatrix}
\eta_{model}\\ \eta_{obs}
\end{pmatrix} &= \tilde y\\
\begin{pmatrix}
u\\ \theta
\end{pmatrix} + \begin{pmatrix}
\eta_{param}\\ \eta_{NN}
\end{pmatrix} &= 0
\end{align}
with $\eta_{model}\sim\mathcal N(0,1/\lambda\, \hat \Gamma_{model})$, $\eta_{obs}\sim\mathcal N(0,\Gamma_{obs})$, $u\sim \mathcal N(u_0,1/\alpha_1\, C)$ and $\theta\sim \textcolor{black}{\mathcal N(0,1/\alpha_2\, I)}$. The loss function for the augmented system is therefore given by
\begin{equation}\label{eq:lossaugmented}
\frac12\|O(p_\theta)-y\|_{\Gamma_{obs}}^2 +\frac{\lambda}{2} \|M(u,p_\theta)\|_{\hat\Gamma_{model}}^2 +  \textcolor{black}{\frac{\alpha_1}{2}}\mathcal \|u-u_0\|^2_C+ \textcolor{black}{\frac{\alpha_2}{2}}\mathcal \|\theta\|^2\,.
\end{equation}
Assuming that the resulting forward operator
\begin{equation}\label{eq:augsys}
G(u,\theta)=\begin{pmatrix}F(u,p_\theta)\\u\\ \theta \end{pmatrix}
\end{equation}
is linear, the EKI will converge to the minimum of the regularized loss function \eqref{eq:lossaugmented}, cp.~\cite{SS17}.
To ensure the feasibility of the EKI estimate (w.r.~to the underlying forward problem), we propose the following algorithm using the ideas discussed in section \ref{subsec:vanishnoise}.
\setcounter{figure}{0} 
\begin{algorithm}
	\label{alg:pEKI}
\begin{algorithmic}[1]
\Require initial ensemble $v_0^{(j)}=(u_0^{(j)},\theta_0^{(j)})^\top\in \mathcal X \times {\Theta}, j=1,\ldots J,\ \lambda_0$.
\For{$k=0,1,2,\dots$}
  \State Compute an approximation of the minimizer $(u_k,\theta_k)^\top$ of
  \begin{equation*}
\min_{u,\theta}\ \frac12\|O(p_\theta)-y\|_{\Gamma_{obs}}^2 +\frac{\lambda_k}{2} \|M(u,p_\theta)\|_{\hat\Gamma_{model}}^2 + \textcolor{black}{\frac{\alpha_1}{2}}\mathcal \|u-u_0\|^2_C+ \textcolor{black}{\frac{\alpha_2}{2}}\mathcal \|\theta\|^2\,.
\end{equation*}
  by solving
  \begin{eqnarray*}
   \frac{\dd v^{(j)}}{\dd t}&=&C^{vy}(v)\Gamma^{-1}
(\textcolor{black}{\hat{y}}- G(v^{(j)}))+C^{vy}(v^{(j)})\Gamma^{-1}\sqrt{\Sigma}\,
\frac{dW^{(j)}}{dt}
\end{eqnarray*}
with \textcolor{black}{$\hat{y} = (0,y,0,0)^\top$,} $v^{(j)}(0)=v^{(j)}_0$ for the system \eqref{eq:augsys} and {\footnotesize \textcolor{black}{$\Gamma = 
\text{diag}\left(C,I,\tfrac{1}{\alpha_1}C,\tfrac{1}{\alpha_2}I\right)$.}}
\State Set $v_k=(u_k,\theta_k)^\top=\textcolor{black}{\lim_{T\to \infty} \bar v(T)}$.
  \State Increase $\lambda_k$.
  \State Draw $J$ ensemble members $v_0^{(j)}$ from $\mathcal N(v_k, \scriptsize{\begin{pmatrix}  C & 0\\0 &  I\end{pmatrix}})$.
\EndFor
\end{algorithmic}
\renewcommand{\figurename}{Alg.}
\captionof{figure}{Penalty ensemble Kalman inversion for neural network based one-shot inversion}
\renewcommand{\figurename}{Fig.}
\end{algorithm} 
\renewcommand\thefigure{\arabic{figure}}

\begin{thm}
\label{theo:conv}
Assume that the forward operator {$G: \mathcal X\times \Theta \to \mathbb R^{n_G}$,} $n_G:=n_w+n_y+n_u+n_\theta$,
\begin{equation*}
 G(u,\theta)=\begin{pmatrix}F(u,p_\theta)\\u\\ \theta \end{pmatrix}
\end{equation*}
is linear, i.e.~$F(u,p_\theta)=A\begin{pmatrix}u \\ \theta\end{pmatrix}$ with $A\in{\mathcal L(\mathcal X\times \Theta, \mathbb R^{n_w+ n_y})}$. Let $(\lambda_k)_{k\in\mathbb N}\subset \mathbb R_+$ be strictly monotonically increasing and $\lambda_k\to\infty$ for $k\to\infty$.
Further, assume that the initial ensemble members are chosen so that {$\spann\{(u^{(j)}(0),\theta^{(j)}(0))^\top, j=1,\ldots,J\}=\mathcal X\times \Theta$}.

Then, Algorithm 1 generates a sequence of estimates $(\bar u_k, \bar \theta_k)_{k\in\mathbb N}$, where $\bar u_k, \bar \theta_k$ minimizes the loss function for the augmented system given by
\begin{equation*}
\frac12\|O(p_\theta)-y\|_{\Gamma_{obs}}^2 +\frac{\lambda_k}{2} \|M(u,p_\theta)\|_{\hat\Gamma_{model}}^2 +  \textcolor{black}{\frac{\alpha_1}{2}}\mathcal \|u-u_0\|^2_C+ \textcolor{black}{\frac{\alpha_2}{2}}\mathcal \|\theta\|^2\,
\end{equation*}
with given $\alpha_1, \alpha_2>0$.
Furthermore, every accumulation point of $(\bar u_k, \bar \theta_k)_{k\in\mathbb N}$ is the (unique, global) minimizer of
\begin{eqnarray*}
&\min\limits_{u,\theta}&\frac12\|O(p_\theta)-y\|_{\Gamma_{obs}}^2 + \textcolor{black}{\frac{\alpha_1}{2}}\mathcal \|u-u_0\|^2_C+ \textcolor{black}{\frac{\alpha_2}{2}}\mathcal \|\theta\|^2\\
&\mbox{s.t. }& M(u,p_\theta)=0\,
\end{eqnarray*}
\end{thm}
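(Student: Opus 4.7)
The plan is to decompose the proof along the two-level structure of Algorithm 1: the outer loop is a penalty iteration of the form treated in Proposition \ref{prop:pen}, while the inner loop is a continuous-time linear EKI. I would first show that, for fixed $\lambda_k$, the inner EKI delivers the unique minimizer $(\bar u_k, \bar \theta_k)$ of the Tikhonov-regularized augmented loss; then I would feed the resulting sequence into Proposition \ref{prop:pen} to obtain the statement about accumulation points.

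For the inner step I would rely on the linear EKI analysis of \cite{SS17, SS17b, BSWW19}. Since $F$ is linear by assumption, the augmented forward map $G(u,\theta) = (F(u,p_\theta), u, \theta)^\top$ is also linear, say $G(v) = \tilde A v$. With the appropriate block-diagonal noise covariance $\Gamma$ and data vector $\hat y = (0, y, 0, 0)^\top$, the weighted least-squares loss $\tfrac12\|G(v) - \hat y\|_\Gamma^2$ coincides with the penalized loss appearing in the theorem. The subspace property of EKI ensures each $v^{(j)}(t)$ stays in the affine span of the initial ensemble, which by the spanning assumption equals $\mathcal X \times \Theta$. The continuous-time EKI mean then satisfies the closed-form expressions \eqref{eq:odelinearCinv}--\eqref{eq:odelinear_mean}, and since $\tilde A^\top \Gamma^{-1} \tilde A$ is strictly positive definite (owing to the identity blocks weighted by $\alpha_1,\alpha_2 > 0$), the augmented loss has a unique global minimizer $v_k^\ast$ and $\bar v(T) \to v_k^\ast$ as $T \to \infty$. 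This gives the first claim.

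For the outer step I would verify the hypotheses of Proposition \ref{prop:pen} for the sequence $(\bar u_k, \bar \theta_k)$: $O$ and $M$ are linear hence continuous, the quadratic regularizations are continuous, and the feasible set $\{(u,\theta):M(u,p_\theta) = 0\}$ is a closed affine subspace, assumed nonempty. Proposition \ref{prop:pen} then yields that every accumulation point is a global minimizer of the constrained problem; the uniqueness claim follows from strong convexity of the objective on the closed feasible set, induced by the positive-definite terms $\tfrac{\alpha_1}{2}\|u-u_0\|_C^2 + \tfrac{\alpha_2}{2}\|\theta\|^2$.

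The step I expect to require the most care is the inner one: verifying that the EKI subspace restriction does not exclude the unconstrained minimizer of the augmented loss, and matching the noise covariance in the EKI dynamics precisely with the weights in the theorem's loss. Once these identifications are in place, the closed-form convergence \eqref{eq:odelinear_mean} together with Proposition \ref{prop:pen} assemble into the theorem with essentially no further work.
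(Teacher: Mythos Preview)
Your proposal is correct and follows essentially the same two-level decomposition as the paper: strict convexity of the penalized loss plus the spanning assumption yield convergence of the inner EKI to the unique minimizer (the paper cites \cite[Theorem 3.13]{CST19} and \cite[Theorem 4]{SS17} directly rather than the mean-field formulas \eqref{eq:odelinearCinv}--\eqref{eq:odelinear_mean}, which are stated for $J\to\infty$), and Proposition~\ref{prop:pen} then handles the outer penalty iteration. Your write-up is in fact more explicit than the paper's three-line proof in verifying the hypotheses of Proposition~\ref{prop:pen} and the source of uniqueness.
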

\begin{proof}
Under the assumption of a linear forward model, the penalty function 
\begin{equation*}
\frac12\|O(p_\theta)-y\|_{\Gamma_{obs}}^2 +\frac{\lambda_k}{2} \|M(u,p_\theta)\|_{\hat\Gamma_{model}}^2 +  \textcolor{black}{\frac{\alpha_1}{2}}\mathcal \|u-u_0\|^2_C+ \textcolor{black}{\frac{\alpha_2}{2}}\mathcal \|\theta\|^2\,
\end{equation*}
is strictly convex for all $k\in\mathbb N$, i.e.~there exists a unique minimizer of the penalized problem. Choosing the initial ensemble such that  {$\spann\{(u^{(j)}(0),\theta^{(j)}(0))^\top, j=1,\ldots,J\}=\mathcal X\times \Theta$} ensures the convergence of the EKI estimate to the global minimizer, see \cite[Theorem 3.13]{CST19} and \cite[Theorem 4]{SS17}. The convergence of Algorithm 1 to the minimzer of the constrained problem then follows from Proposition \ref{prop:pen}. 
\end{proof}
\begin{rmk}
Note that the convergence result Theorem \ref{theo:conv} involves an assumption on the size of the ensemble to ensure the convergence to the (global) minimizer of the loss function in each iteration. This is due to the well-known subspace property of the EKI, i.e.~the EKI estimate will lie in the span of the initial ensemble when using the EKI in its variant as discussed here. In case of a large or possibly infinite-dimensional parameter / state space, the assumption on the size of the ensemble is usually not satisfied in practice. Techniques such as variance inflation, localization and adaptive ensemble choice are able to overcome the subspace property and thus might lead to much more efficient algorithms from a computational point of view. Furthermore, we stress the fact that the convergence result presented above requires the linearity of the forward and observation operator (w.r.~to the optimization variables), i.e.~the assumption is not fulfilled when considering neural networks with a nonlinear activation function as approximation of the forward problem. \textcolor{black}{Please note that the proposed approach can be applied in a rather general framework for arbitrary approximation methods of the forward problem, i.e. in particular for finite element approximations. However, in the case of neural networks with nonlinear activation functions, the assumptions are not met, but numerical experiments show promising results even in the nonlinear setting. Therefore, Theorem 1 is included to cover the case with linear approximations of the forward problem and as a starting point for the analysis of nonlinear problems, which  will be subject to future work. }
\end{rmk}

\textcolor{black}{To accelerate the computation of the minimizer, we suggest the following variant. Algorithm 1 requires the solution of a sequence of optimization problems, i.e. for each $\lambda$, EKI is used to approximate the solution of the corresponding minimization problem. To avoid the repeated application of EKI, the idea of Algorithm 2 is to solve just one optimization problem with increasing regularization parameter $\lambda$ (instead of the sequence of optimization problems). This is incorporated in the continuous version of EKI by solving an additional differential equation for $\lambda$ with nondecreasing right-hand-side. The computational effort is thus reduced and numerical experiments suggest a comparable performance in terms of accuracy. The theoretical analysis of the convergence behavior will be subject to future work.} 
\begin{algorithm}
	\label{alg:pEKI2}
\begin{algorithmic}[1]
\Require initial ensemble $v_0^{(j)}=(u_0^{(j)},\theta_0^{(j)})^\top\in \mathcal X \times {\Theta}, j=1,\ldots, J,\ \lambda_0\in \mathbb R_{\ge 0}$, $f:\mathbb R_{\ge 0}\to\mathbb R_+ $.
  \State Compute an approximation of the minimizer of
  \begin{eqnarray*}
&\min\limits_{u,\theta}&\frac12\|O(p_\theta)-y\|_{\Gamma_{obs}}^2 + \textcolor{black}{\frac{\alpha_1}{2}}\mathcal \|u-u_0\|^2_C+ \textcolor{black}{\frac{\alpha_2}{2}}\mathcal \|\theta\|^2\\
&\mbox{s.t. }& M(u,p_\theta)=0\,
\end{eqnarray*}
  by solving the following system
  \begin{eqnarray*}
   \frac{\dd v^{(j)}}{\dd t}&=&C^{vy}(v)\Gamma^{-1}
(\textcolor{black}{\hat{y}}- G(v^{(j)}))+C^{vy}(v^{(j)})\Gamma^{-1}\sqrt{\Sigma} \,
\frac{dW^{(j)}}{dt}\\
 \frac{\dd \lambda}{\dd t}&=&f(\lambda)
\end{eqnarray*}
with  \textcolor{black}{$\hat{y} = (0,y,0,0)^\top$,} $v^{(j)}(0)=v^{(j)}_0$ for the system \eqref{eq:augsys}, $\lambda(0)=\lambda_0$ and {\footnotesize \textcolor{black}{$\Gamma = \text{diag}\left(C,I,\tfrac{1}{\alpha_1}C,\tfrac{1}{\alpha_2}I\right)$}}.
\end{algorithmic}
\renewcommand{\figurename}{Alg.}
\captionof{figure}{Simultaneous penalty ensemble Kalman inversion for neural network based one-shot inversion}
\renewcommand{\figurename}{Fig.}
\end{algorithm} 
\renewcommand\thefigure{\arabic{figure}}
\setcounter{figure}{1}

\section{Numerical Results}\label{sec:ne}
We present in the following numerical experiments illustrating the one-shot inversion. \textcolor{black}{The first example is a one-dimensional problem, for which we compare the black-box method, quasi-Newton method for the one-shot inversion, {quasi-Newton method for the neural network based one-shot inversion (Algorithm 1),} EKI for the one-shot inversion and  EKI for the neural networks based one-shot inversion (Algorithm 2) in the linear setting.} \textcolor{black}{Furthermore, we numerically explore the convergence behavior of the EKI for the neural networks based one-shot inversion (Algorithm 2) also for a nonlinear forward model. The second experiment is concerned with the extension of the linear model to the two-dimensional problem to investigate the potential of the EKI for neural network based inversion in the higher-dimensional setting.}
\subsection{One-dimensional example}

We consider the problem of recovering the unknown data $u^\dagger$ from noisy observations $$y = \mathcal O(p^\dagger)+\eta^\dagger,$$
where $p^\dagger =\mathcal{A}^{-1}(u^\dagger)$ is the solution of the one-dimensional elliptic equation
\begin{equation}\label{FP}
\begin{split}
-\frac{\mathrm{d}^2p}{\mathrm{d} x^2}+p&=u^\dagger \quad \mbox{in} \ D:=(0,\pi), \cr
	p&=0 \quad \mbox{on}\ \partial D,
\end{split}
\end{equation}
with operator $\cO$ observing the dynamical system at $n_y=2^3-1$ equispaced observation points $x_i=\frac i{2^4}\cdot \pi$, $i=1,\dots,n_y$. 

We approximate the forward-problem \eqref{FP} numerically on a uniform mesh with meshwidth $h=2^{-6}$ by a finite element method with continuous, piecewise linear ansatz functions. The approximated solution operator will be denoted by $S\in\R^{n_u\times n_u}$, with $n_u=1/h$.

{The unknown parameter $u$ is assumed to be Gaussian, i.e.~$u\sim\mathcal N(0,C_0)$, with (discretized) covariance operator $C_0=\beta(-\frac{\mathrm{d}^2}{\mathrm{d} x^2})^{-\nu}$ for $\beta=5, \ \nu=1.5$.
For our inverse problem we assume a observational noise covariance $\Gamma_{obs}= 0.1\cdot I_{n_y}$, a model error covariance $\hat\Gamma_{model} = 100\cdot I_{n_u}$ and we choose the regularization parameter $\alpha_1 = 0.002$, while we turn off the regularization on $p$, i.e.~we set $\alpha_2=0$.} Further, we choose a feed-forward DNN with $L=3$ layers, where we set $N_1=N_2=10$ size of the hidden layers and $N_0= N_L=1$ size of the input and output layer. As activation function we choose the sigmoid function $\varrho(x) = \frac{1}{1 + e^{-x}}$. 
The EKI method is based on the deterministic formulation represented through the coupled ODE system
\begin{equation}\label{eq:EKI_ode}
\frac{\dd v^{(j)}}{\\d t} = C^{vy}(v)\Gamma^{-1}(y-G(v^{(j)})),
\end{equation}
which will be solved with the \texttt{MATLAB} function \texttt{ode45} up to time $T=10^{10}$. {The ensemble of particles $(u^{(j)})$, $(u^{(j)},p^{(j)})$ and $(u^{(j)},\theta^{(j)})$ respectively will be initialized by $J=150$ particles as i.i.d.~samples, where the parameters $u_0^{(j)}$ are drawn from the prior distribution $\mathcal N(0,C_0)$, the state $p_0^{(j)}$ are drawn from $\mathcal N(0,5\, I_{n_p})$ and the weights of the neural network approximation are drawn from $\mathcal N(0,I_{n_\theta})$, which are all independent from each other. 
}
 
We compare the results to a classical gradient-based method, namely the quasi-Newton method with BFGS updates,  as implemented by \texttt{MATLAB}.

We summarize the methods in the following and introduce abbreviations:
\begin{enumerate}
\item reduced formulation: explicit solution (redTik).
\item one-shot formulation: we compare the performance of the EKI with Algorithm 1 (osEKI\_1),  the EKI with Algorithm 2 (osEKI\_2) and the quasi-Newton method with Algorithm 1 (osQN\_1).
\item neural network based one-shot formulation: we compare the performance of the EKI with Algorithm 2 (nnosEKI\_2) and the quasi-Newton method with Algorithm 1 (nnosQN\_1).
\end{enumerate}

Figure \ref{fig:ex1_scaling1} shows the increasing sequence of $\lambda$ used for Algorithm 1 and the quasi-Newton method and Algorithm 2 (over time).
\begin{figure}[!htb]
	\centering
	\includegraphics[width=.5\textwidth]{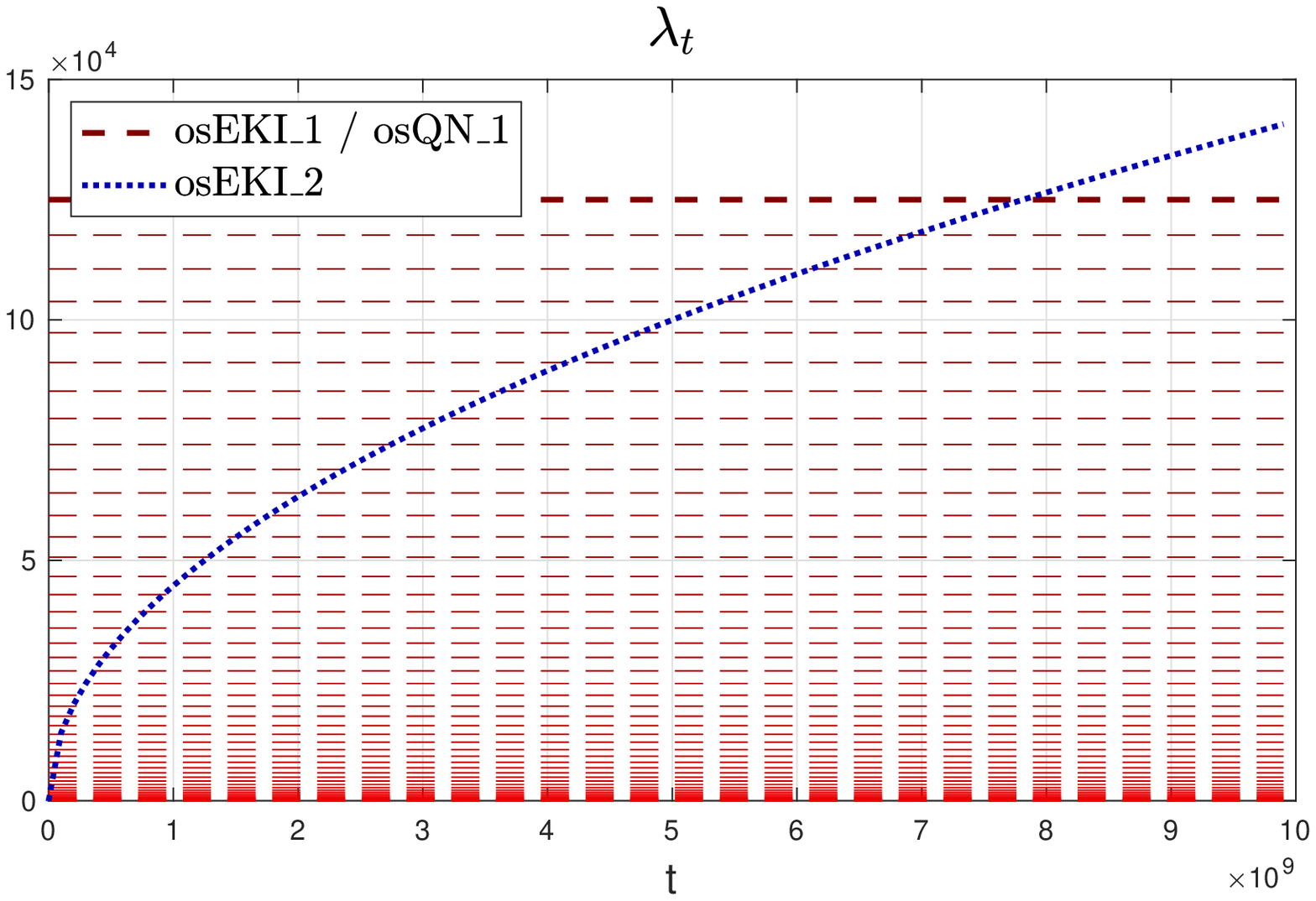}
    \caption{Scaling parameter $\lambda$ depending on time for Algorithm 1, {$\lambda_k= k^3$ for $k=1,2,\ldots,50$}, and Algorithm 2, {$\mathrm d\lambda/\mathrm dt=1/\lambda$}.}\label{fig:ex1_scaling1}
\end{figure}

\subsubsection{One-shot inversion}
To illustrate the convergence result of the EKI and to numerically investigate the performance of Algorithm 2, we start the discussion by a comparison of the one-shot inversion based on the FEM approximation of the forward problem in the 1d example.

Figure \ref{fig:ex1_est} illustrates the difference of the estimates given by EKI with Algorithm
1 (osEKI\_1), the EKI with Algorithm 2 (osEKI\_2) and the quasi-Newton
method with Algorithm 1 (osQN\_1) compared to the Tikhonov solution and the truth (on the left-hand side) and in the observation space (on the right-hand side). We observe that all three methods lead to an excellent approximation of the Tikhonov solution. Due to the linearity of the forward problem, the quasi-Newton method as well as the EKI with Algorithm 1 are expected to converge to the regularized solution. The EKI with Algorithm 2 shows a similar performance while reducing the compuational effort significantly compared to Algorithm 1.
\begin{figure}[!htb]
	\begin{subfigure}[c]{0.49\textwidth}
	\includegraphics[width=1\textwidth]{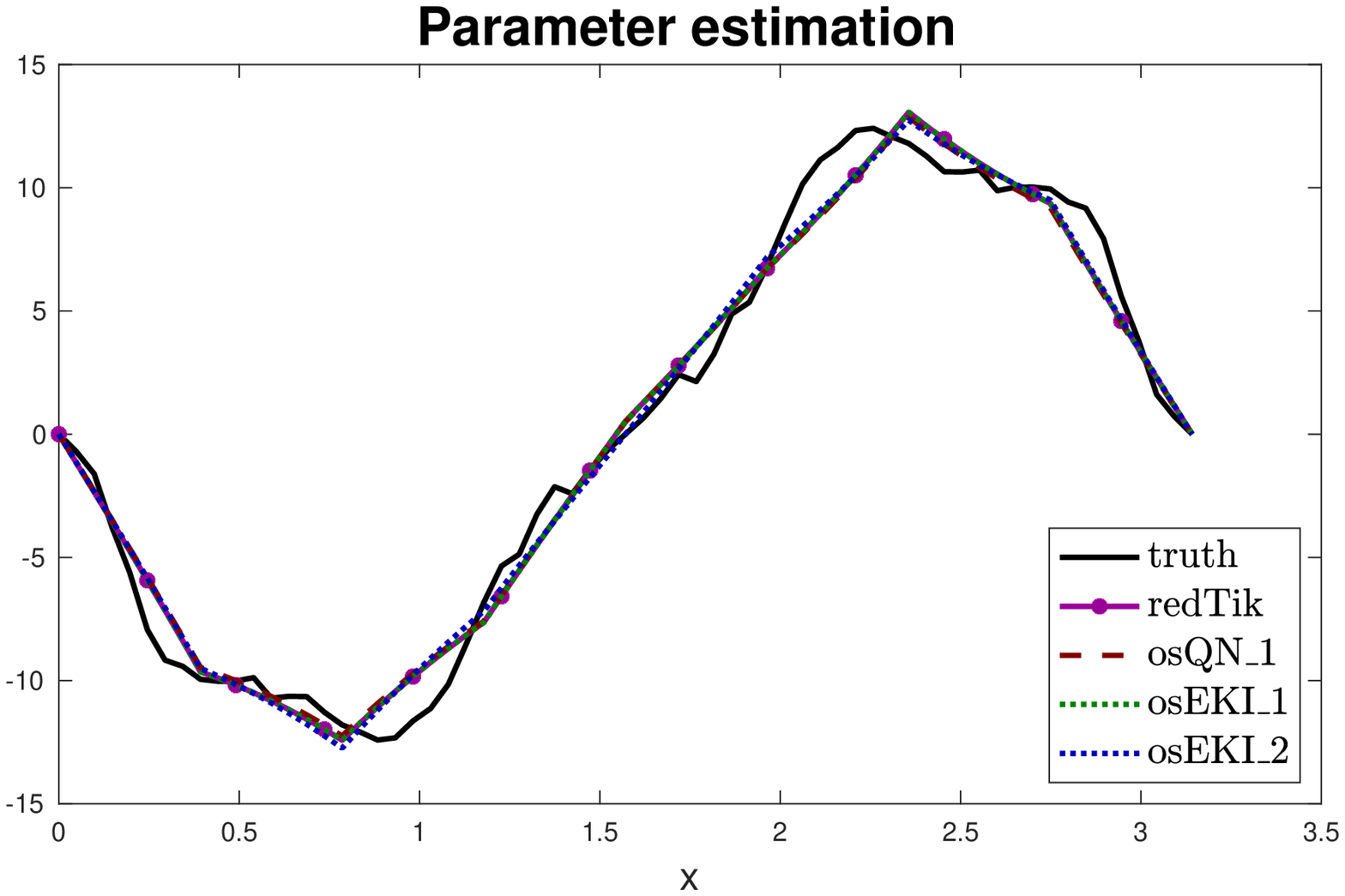}
	\end{subfigure}
	\begin{subfigure}[c]{0.49\textwidth}
	\includegraphics[width=1\textwidth]{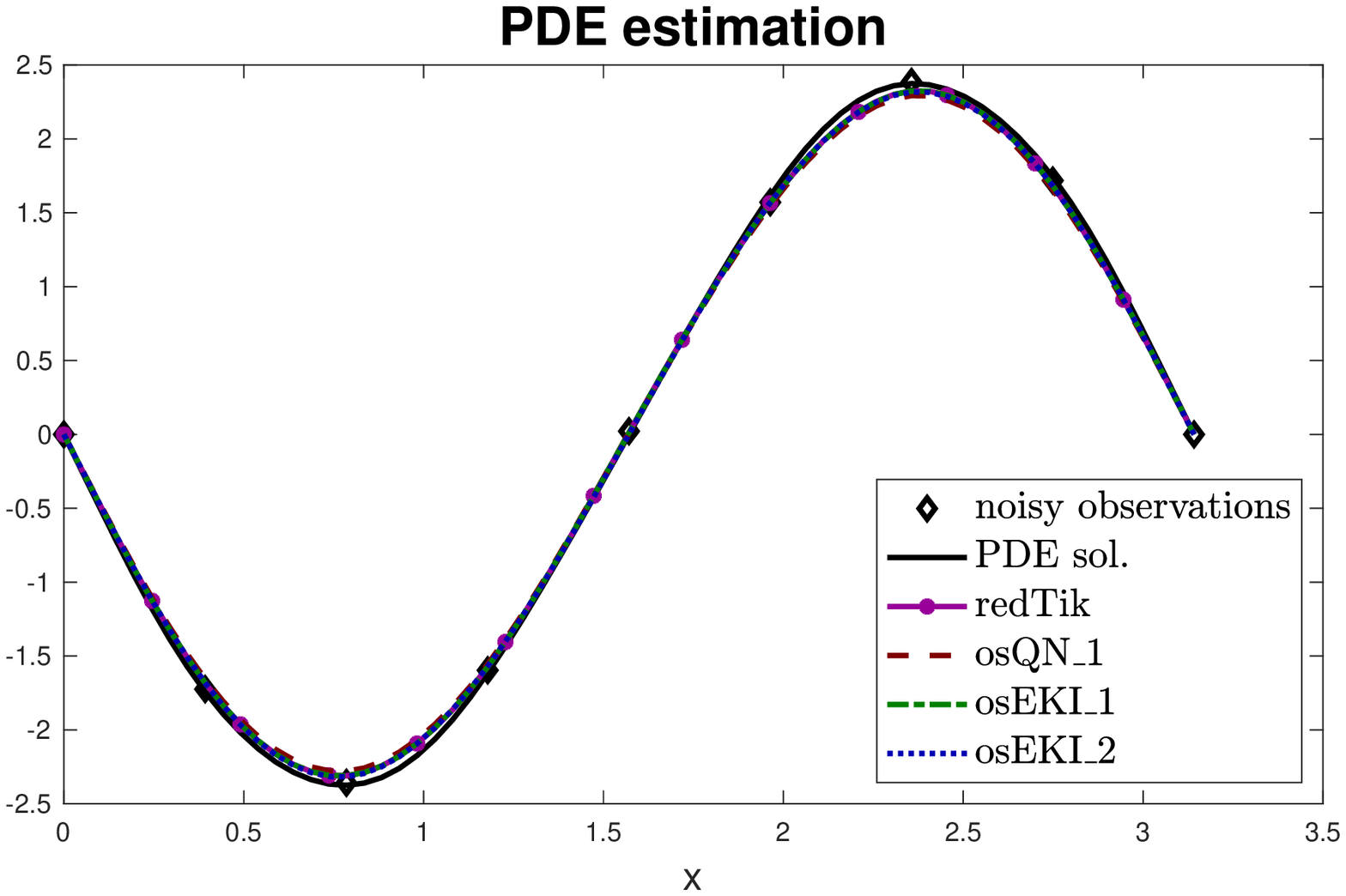}
	\end{subfigure}
    \caption{Comparison of parameter estimation given by EKI with Algorithm
1 (osEKI\_1), the EKI with Algorithm 2 (osEKI\_2) and the quasi-Newton
method with Algorithm 1 (osQN\_1) compared to the Tikhonov solution and the truth (on the left hand side) and in the observation space (on the right hand side).}\label{fig:ex1_est}
\end{figure} 

\begin{figure}[H]
	\begin{subfigure}[c]{0.49\textwidth}
	\includegraphics[width=1\textwidth]{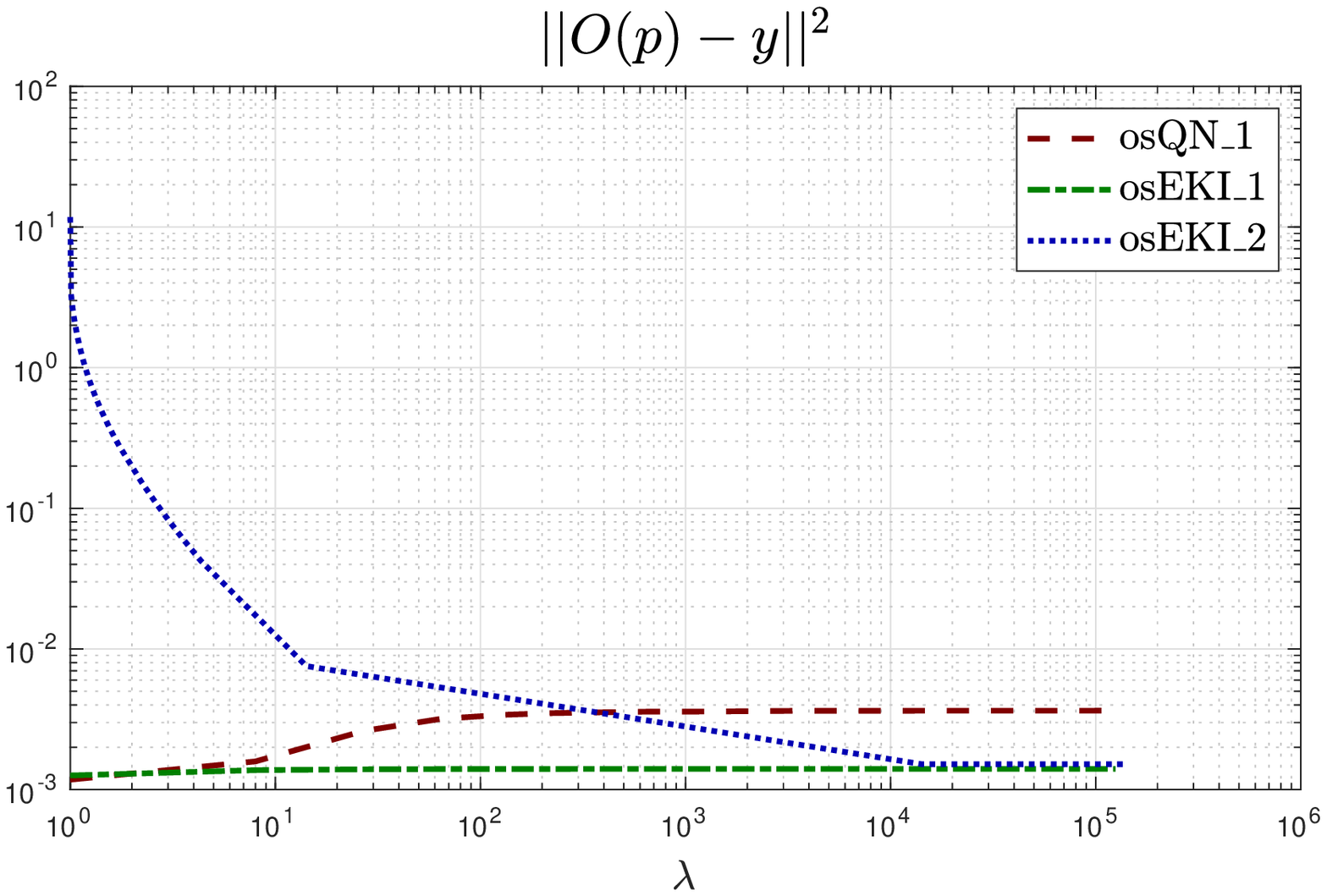}
	\end{subfigure}
	\begin{subfigure}[c]{0.49\textwidth}
	\includegraphics[width=1\textwidth]{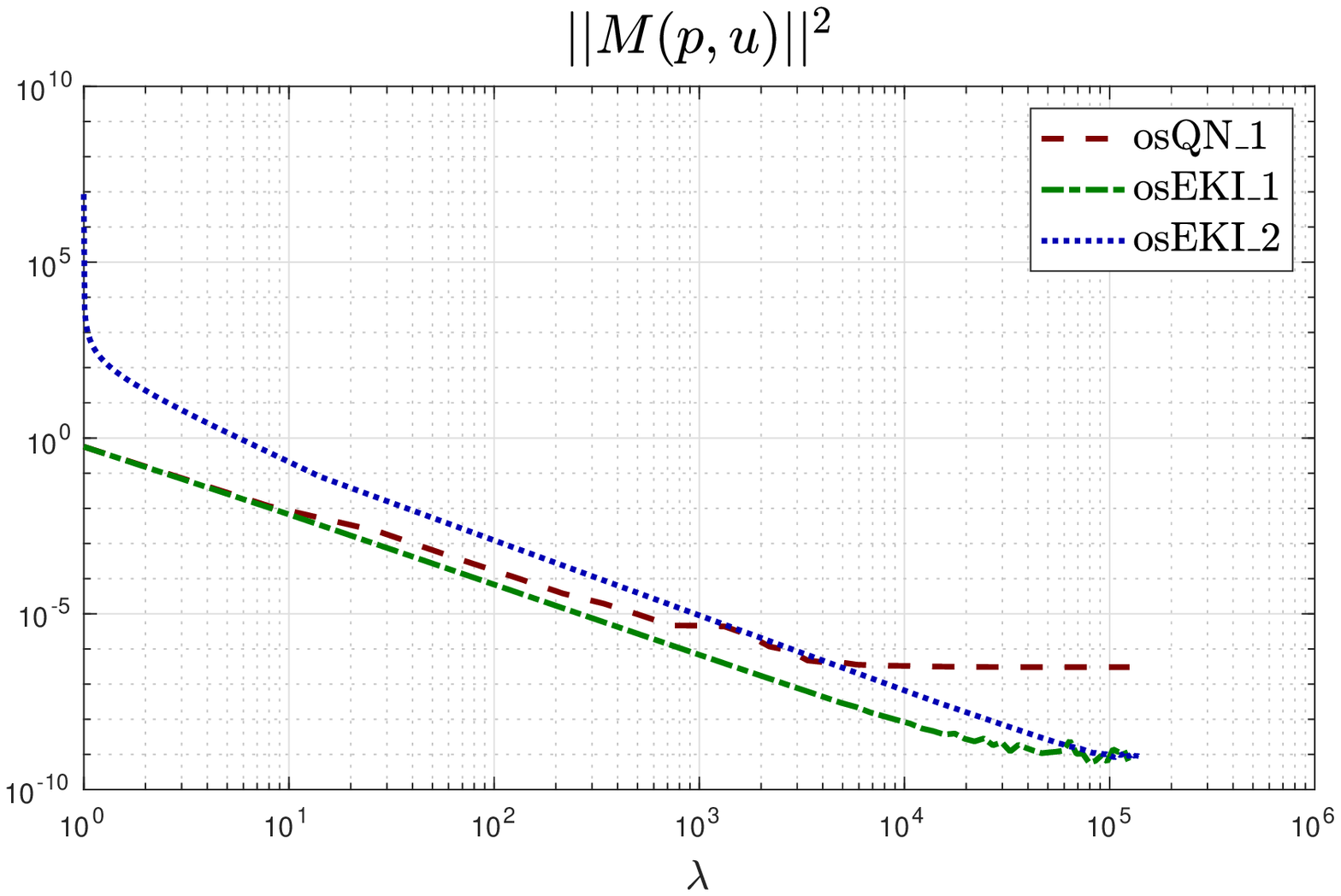}
	\end{subfigure}
    \caption{Comparison of the data misfit given by EKI with Algorithm
1 (osEKI\_1), the EKI with Algorithm 2 (osEKI\_2) and the quasi-Newton
method with Algorithm 1 (osQN\_1) (on the left hand side) and residual of the forward problem (on the right hand side), both w.r.~to $\lambda$.}\label{fig:ex1_datamisfit_PDEerror}
\end{figure} 
The comparison of the data misfit and the residual of the forward problem shown in Figure \ref{fig:ex1_datamisfit_PDEerror} reveals a very good performance of the EKI (for both algorithms) with feasibility of the estimate (w.r.~to the forward problem) in the range of $10^{-10}$.

\subsubsection{One-shot method with neural network approximation}
The next experiment replaces the forward problem by a neural network in the one-shot setting. Due to the excellent performance of Algorithm 2 in the previous experiment, we focus in the following on this approach for the neural network based one-shot inversion.

The EKI for the neural network based one-shot inversion leads to a very good approximation of the regularized solution (cp.~Figure \ref{fig:ex1_est2}), whereas the performance of the quasi-Newton approach is slightly worse, which might be attributed to the nonlinearity introduced by the neural network approximation. 

\begin{figure}[!htb]
	\begin{subfigure}[c]{0.49\textwidth}
	\includegraphics[width=1\textwidth]{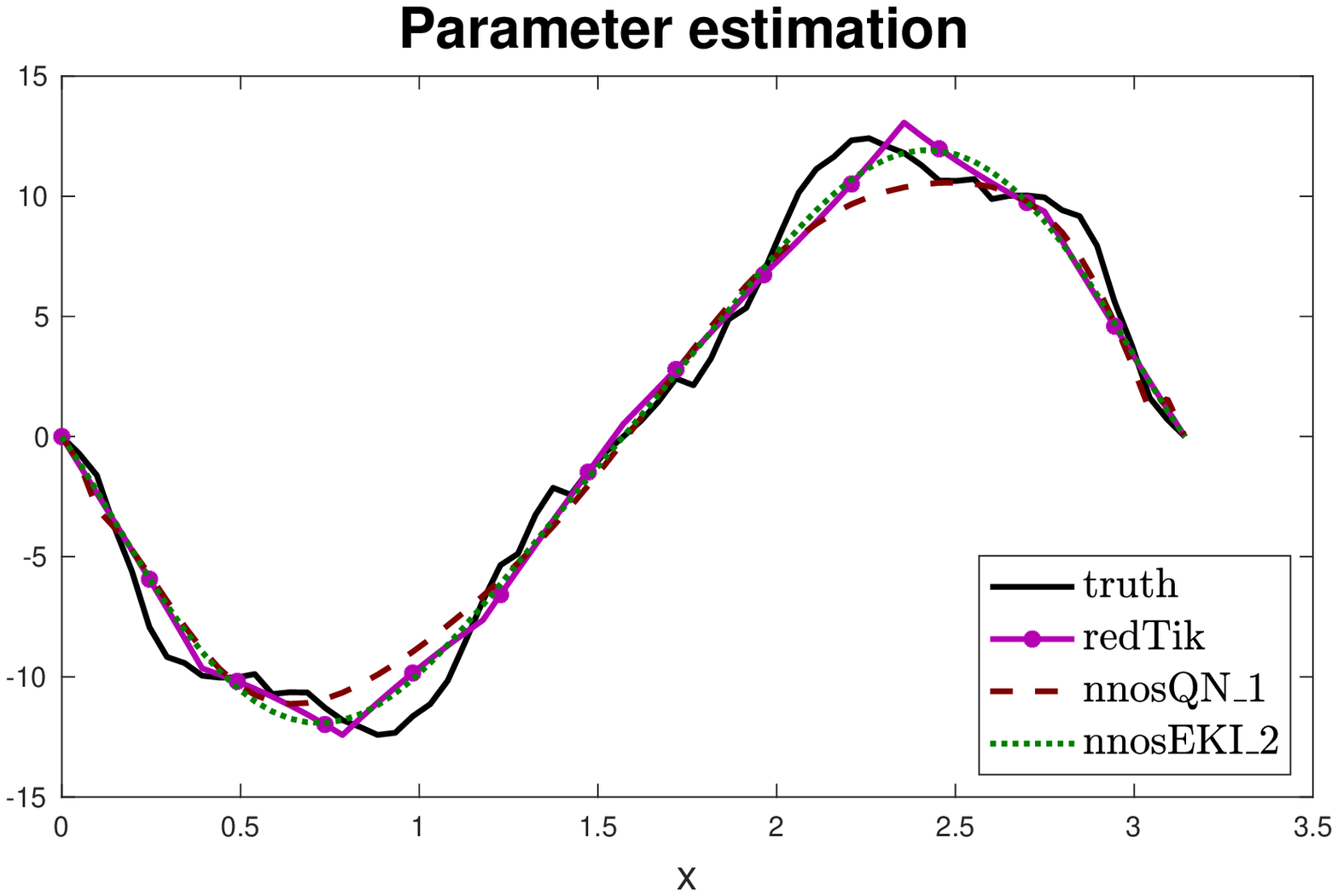}
	\end{subfigure}
	\begin{subfigure}[c]{0.49\textwidth}
	\includegraphics[width=1\textwidth]{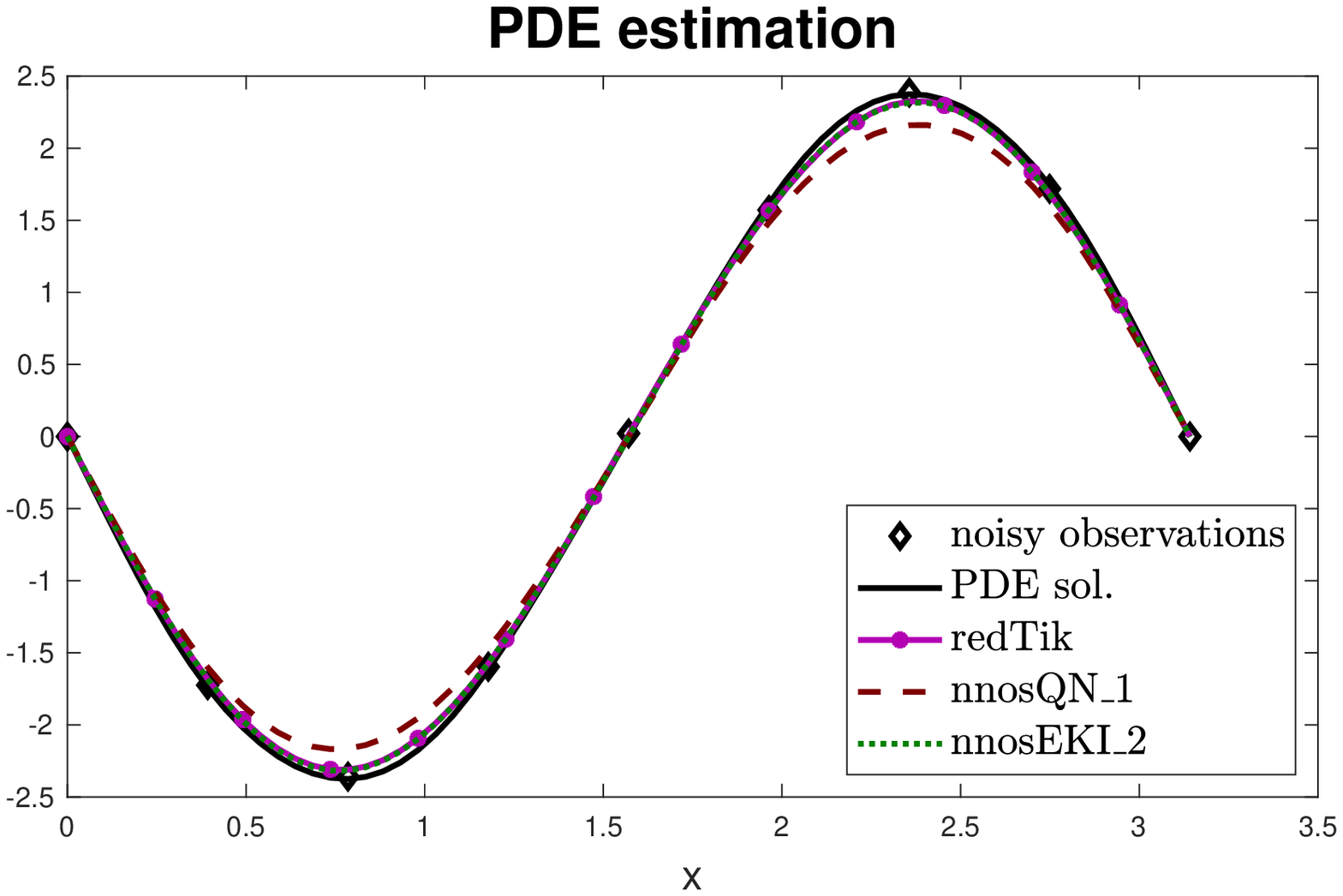}
	\end{subfigure}
    \caption{Comparison of parameter estimation given by the EKI with Algorithm 2 (nnosEKI\_2) and the quasi-Newton
method with Algorithm 1 (nnosQN\_1) for the neural network based one-shot inversion compared to the Tikhonov solution and the truth (on the left hand side) and in the observation space (on the right hand side).}\label{fig:ex1_est2}
\end{figure}

\begin{figure}[!htb]
	\begin{subfigure}[c]{0.49\textwidth}
	\includegraphics[width=1\textwidth]{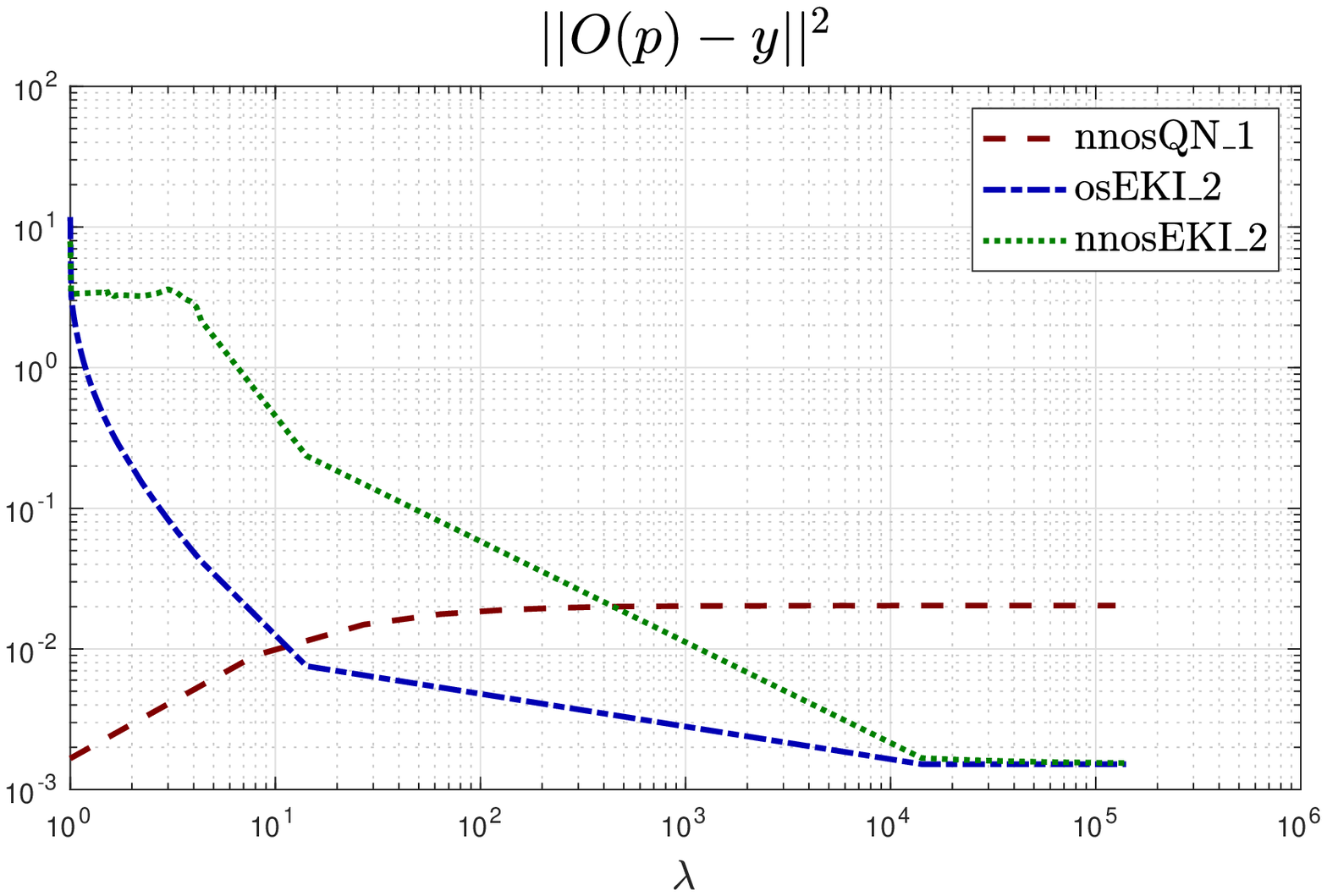}
	\end{subfigure}
	\begin{subfigure}[c]{0.49\textwidth}
	\includegraphics[width=1\textwidth]{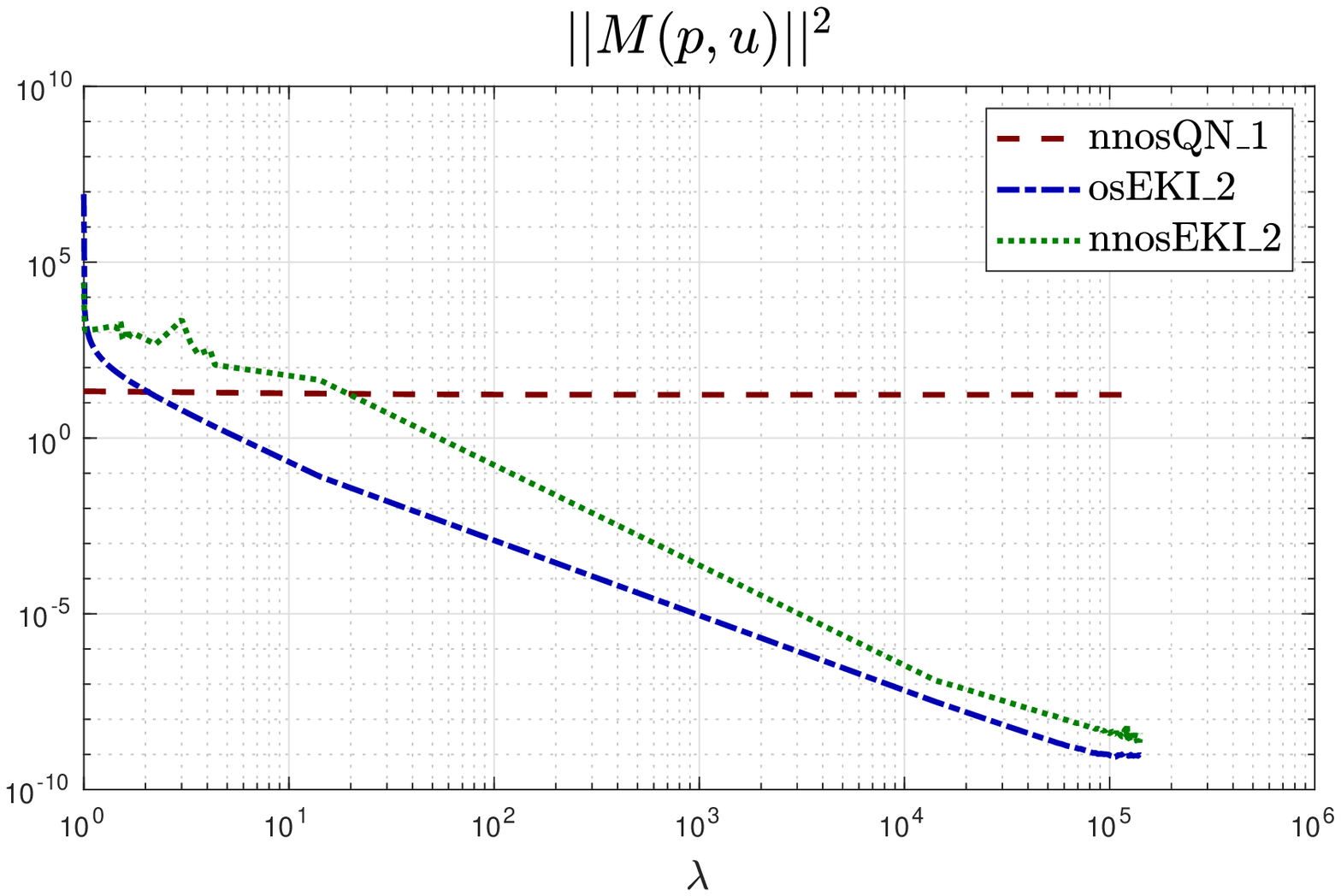}
	\end{subfigure}
    \caption{Comparison of the data misfit given by the EKI with Algorithm 2 (nnosEKI\_2) and the quasi-Newton
method with Algorithm 1 (nnosQN\_1) for the neural network based one-shot inversion compared to EKI with Algorithm
2 (osEKI\_2) from the previous experiment (on the left hand side) and residual of the forward problem (on the right hand side), both w.r.~to $\lambda$.}\label{fig:ex1_est2b}
\end{figure} 

The comparison of the data misfit and residual of the forward problem reveals an excellent convergence behaviour of the EKI for the neural network based one-shot optimization, whereas the quasi-Newton method does not converge to a feasible estimate, cp.~Figure \ref{fig:ex1_est2b}.

\subsubsection{\textcolor{black}{Nonlinear forward model}}
\textcolor{black}{
We consider in the following a nonlinear forward model of the form 
\begin{equation}\label{eq:nonlinear_FP}
\begin{split}
-\nabla\cdot(\exp(u^\dagger)\cdot\nabla p)&=10 \quad \mbox{in} \ D:=(0,\pi), \cr
	p&=0 \quad \mbox{on}\ \partial D\,.
\end{split}
\end{equation}
Note that the mapping from the unknown parameter function to the state is nonlinear.  We use the same discretization as in the linear problem. The unknown parameter $u^\dagger$ is assumed to be Gaussian with zero mean and $C_0=\beta(-\frac{\mathrm{d}^2}{\mathrm{d} x^2})^{-\nu}$ where we choose $\beta=1, \ \nu=2$. Further, we set $\Gamma_{obs}=0.0001\cdot I_{n_y}$, $\hat\Gamma_{model}= 10\cdot I_{n_u}$, $\alpha_1=2$ and $\alpha_2=0$. Furthermore, the structure of the feed-forward DNN remains the same as in the linear case.}

\textcolor{black}{
We compare the one-shot method with neural network approximation resulting from the EKI with Algorithm 2 with the Tikhonov solution of the reduced formulation, which has been approximated by a quasi-Newton method. We determine the scaling parameter $\lambda$ in Algorithm 2 by the ODE $\mathrm d\lambda/\mathrm dt=1$, i.e.~the scaling parameter grows linearly. Similarly to the linear case, we find that the one-shot method with neural network approximation leads to a good approximation of the Tikhonov solution for the reduced model, cp. Figure \ref{fig:ex1_nonlinear_estimates}.
}

\begin{figure}[!htb]
	\begin{subfigure}[c]{0.49\textwidth}
	\includegraphics[width=1\textwidth]{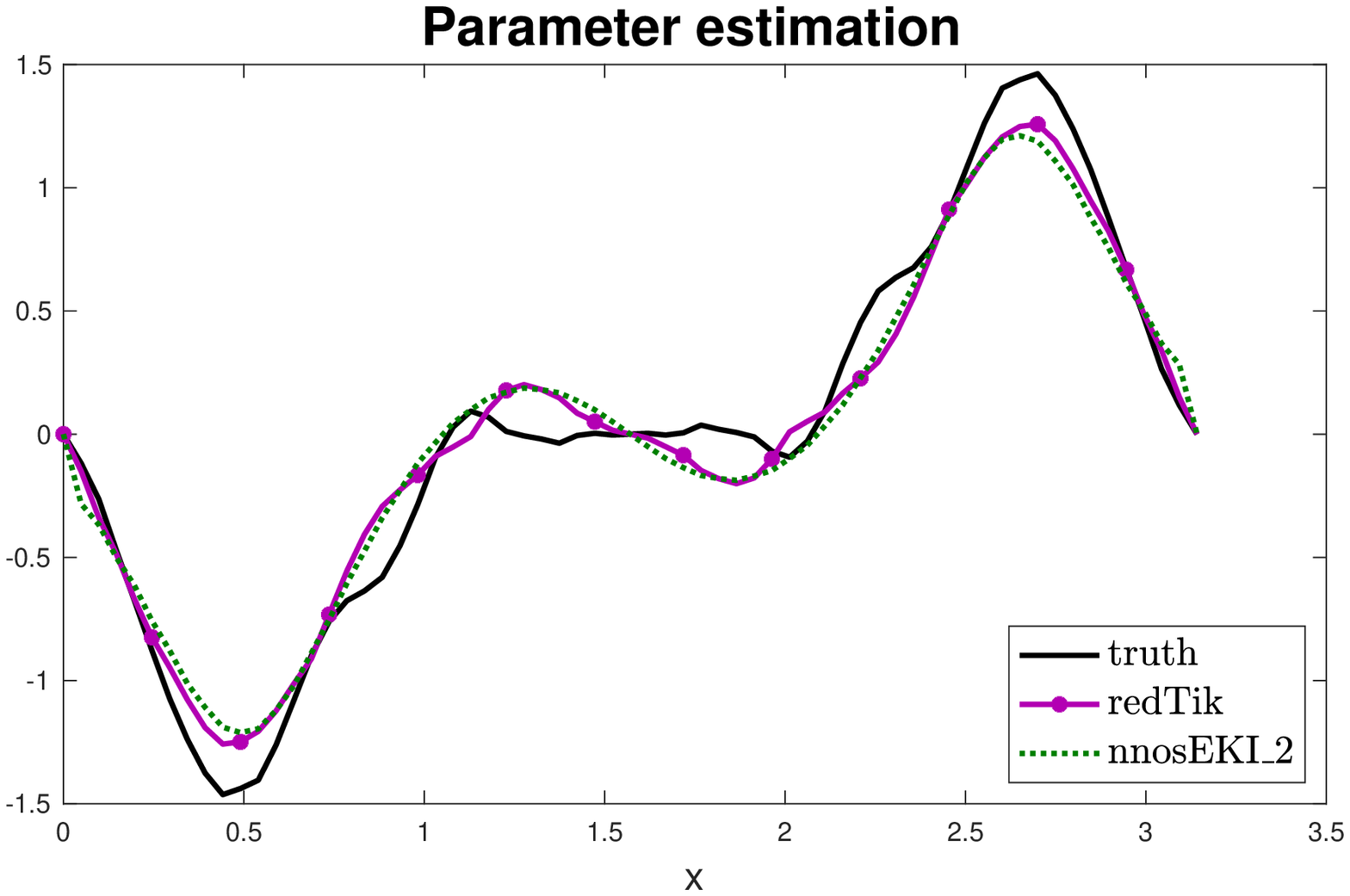}
	\end{subfigure}
	\begin{subfigure}[c]{0.49\textwidth}
	\includegraphics[width=1\textwidth]{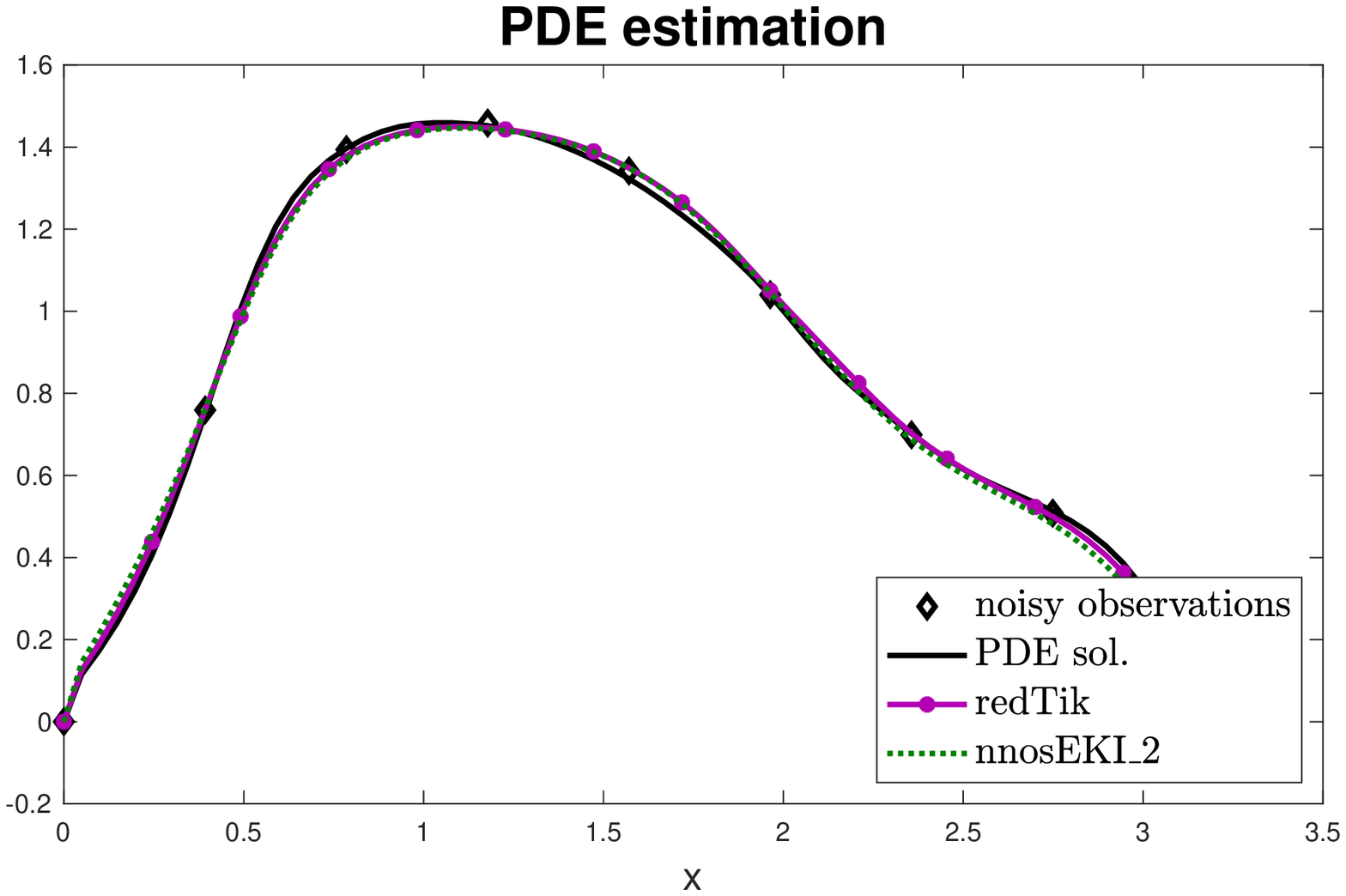}
	\end{subfigure}
    \caption{\textcolor{black}{Comparison of parameter estimation given by the EKI with Algorithm 2 (osEKI\_2) and the Tikhonov solution (on the left hand side) and corresponding PDE solution (on the right hand side).}}\label{fig:ex1_nonlinear_estimates}
\end{figure} 

\textcolor{black}{
In Figure \ref{fig:ex1_nonlinear_datamisfit_PDEerror}, we observe that the penalty parameter $\lambda$ drives the estimate towards feasibilty, i.e. towards the solution of the constrained optimization problem. 
}

\begin{figure}[!htb]
	\begin{subfigure}[c]{0.49\textwidth}
	\includegraphics[width=1\textwidth]{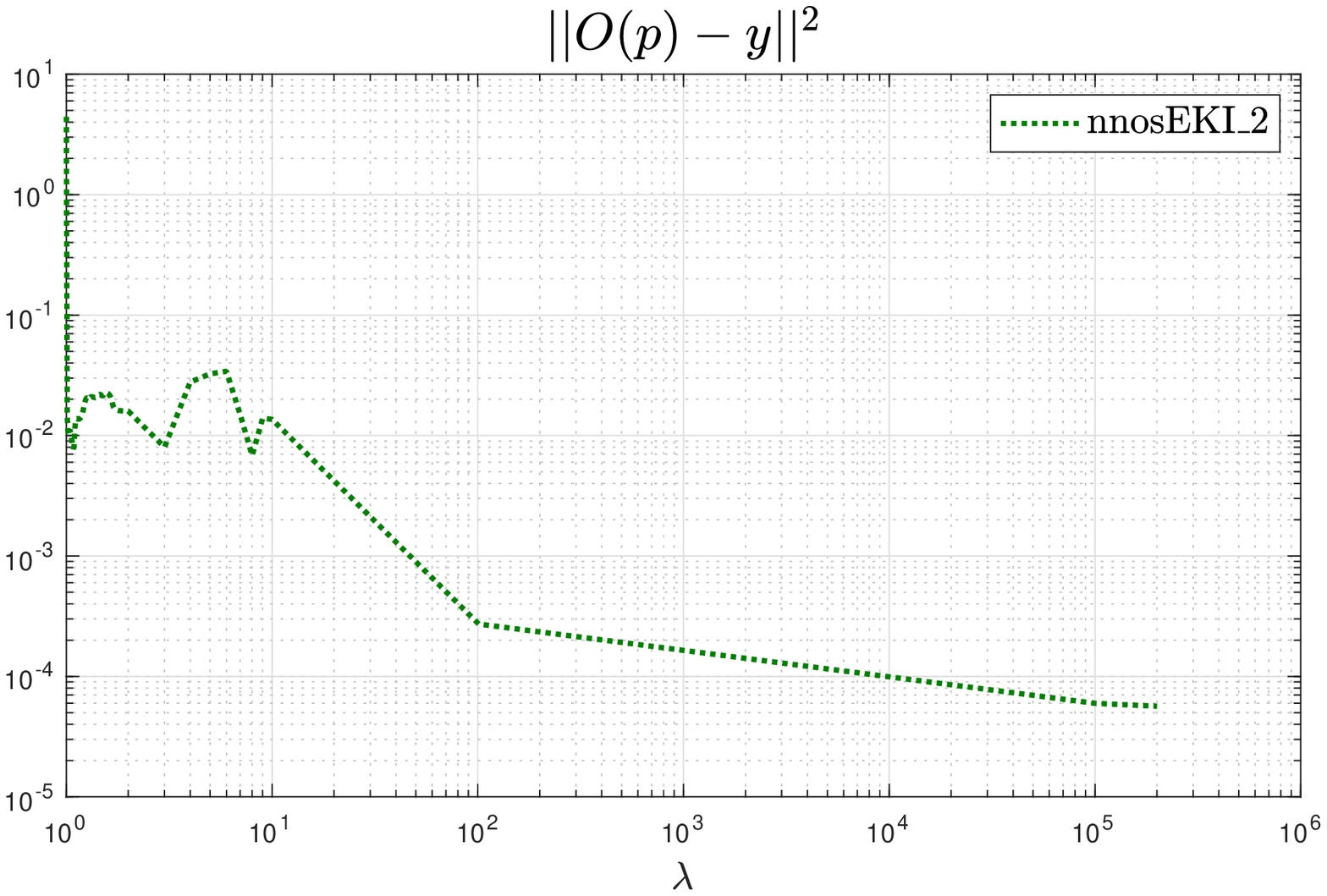}
	\end{subfigure}
	\begin{subfigure}[c]{0.49\textwidth}
	\includegraphics[width=1\textwidth]{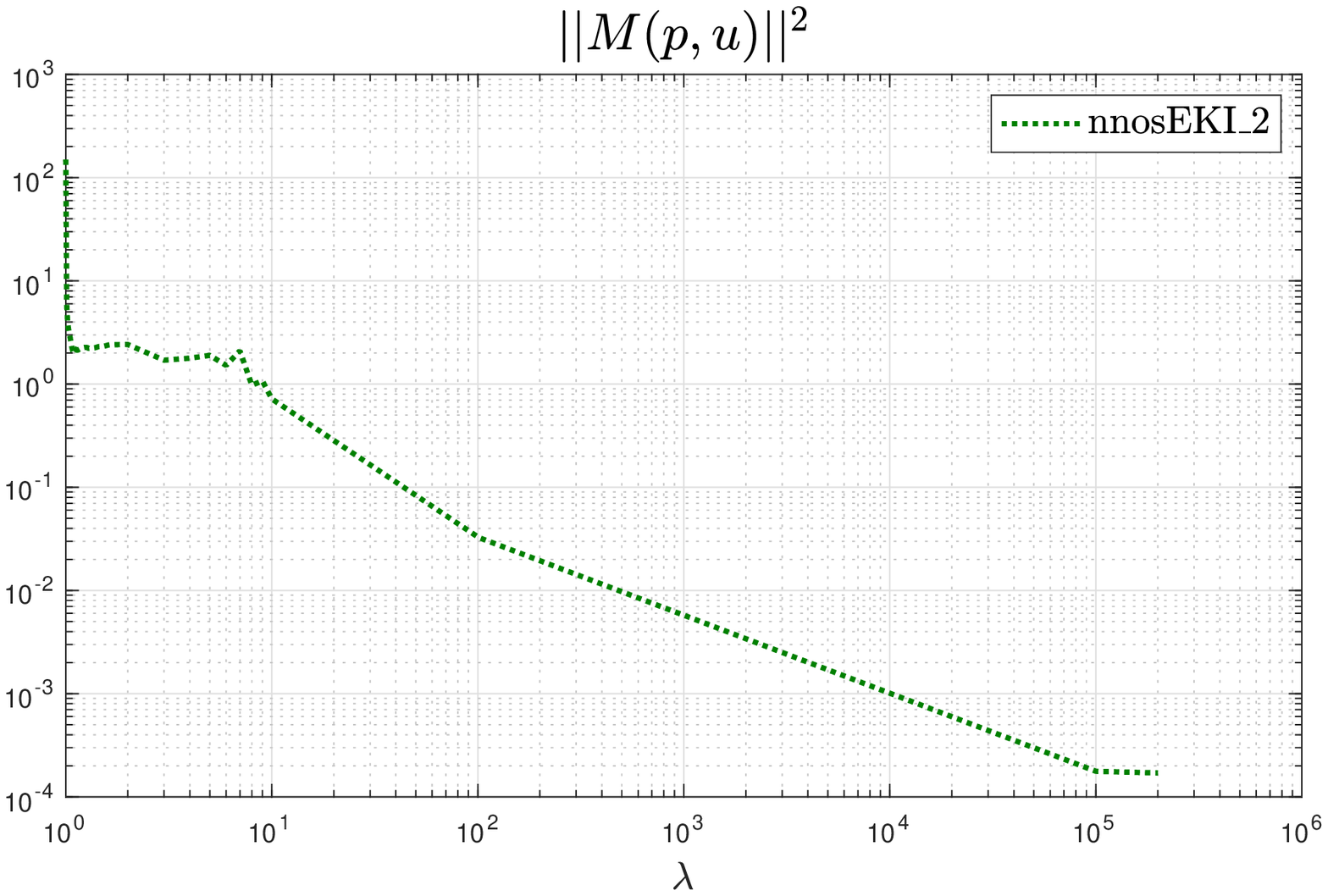}
	\end{subfigure}
    \caption{\textcolor{black}{Data misfit given by the EKI with Algorithm 2 (osEKI\_2) for the neural network based one-shot inversion compared (on the left hand side) and residual of the forward problem (on the right hand side), both w.r.~to $\lambda$.}}\label{fig:ex1_nonlinear_datamisfit_PDEerror}
\end{figure} 

\subsection{Two-dimensional example}

Our second numerical example is based on the two-dimensional Poisson equation
\begin{equation}\label{eq:FP_2d}
\begin{split}
-\Delta p&=u^\dagger \quad \mbox{in} \ D:=(0,1)^2, \cr
	p&=0 \quad \mbox{on}\ \partial D,
\end{split}
\end{equation}
for which we consider again the problem of recovering the unknown source term $u^\dagger$ from noisy observations
\begin{equation}
y = \mathcal O(p^\dagger)+\eta^\dagger,
\end{equation}
with $p^\dagger$ denoting the solution of \eqref{eq:FP_2d}. We consider an observation operator $\mathcal O$ observing $n_y=50$ randomly picked observation points $x_i,\ i=1,\dots,n_y$, shown in Figure~\ref{fig:ex2_reference}. 

We numerically approximate the forward model \eqref{eq:FP_2d} with continuous, piecewise linear finite element basis functions on a mesh with $95$ grid points in $D$ and $40$ grid points on $\partial D$ using the \texttt{MATLAB Partial Differential Equation Toolbox}. We again denote the approximated solution operator by $S\in\R^{n_u\times n_u}$, with $n_u=95$. Similar as before, we assume the unknown parameter $u$ to be Gaussian, this time with (discretized) covariance operator $C_0 = \beta(\tau\cdot \mathrm{id}-\Delta)^{-\nu}$ for $\beta=100$, $\nu=2$ and $\tau=1$. The observational noise covariance  is assumed to be $\Gamma_{obs} = 0.01\cdot I_{n_y}$, whereas we assume a model covariance $\hat\Gamma_{model} = 0.1\cdot I_{n_u}$. We set a regularization parameter $\alpha_1 = 0.002$ and again $\alpha_2=0$. The feed-forward DNN consists of $L=3$ layers, with $N_1=N_2=10$ hidden neurons, $N_0=2$ input neurons and $N_L=1$ output neuron, and sigmoid activation function. The setting of the EKI is as described above with $J=300$ particles drawn as i.i.d. sample from the prior.

Figure \ref{fig:ex2_reference} shows the truth and the corresponding PDE solution.

\begin{figure}[!htb]
	\centering
	\begin{subfigure}[c]{0.49\textwidth}
	\includegraphics[width=1\textwidth]{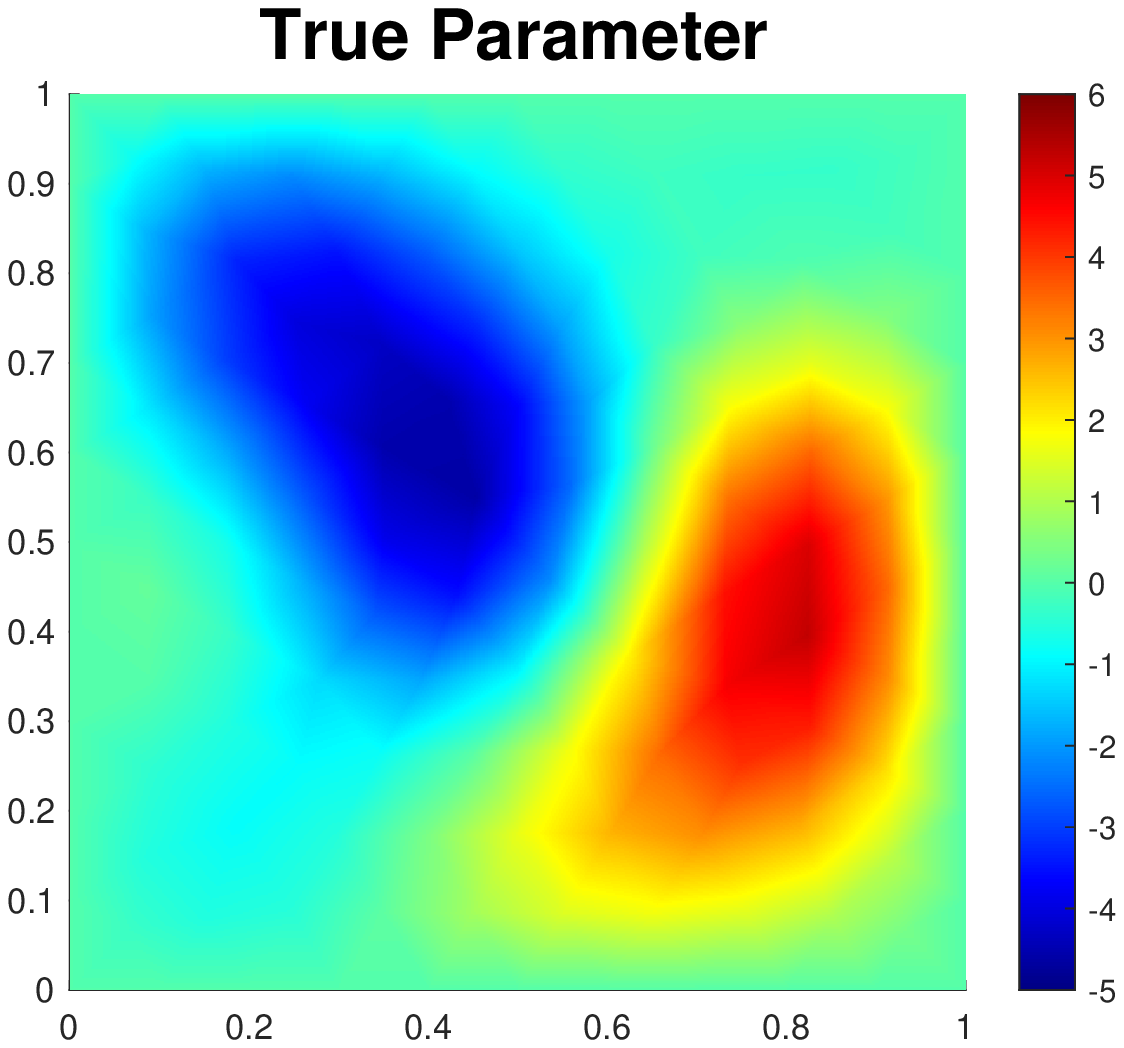}
	\end{subfigure}
	\begin{subfigure}[c]{0.49\textwidth}
	\includegraphics[width=1\textwidth]{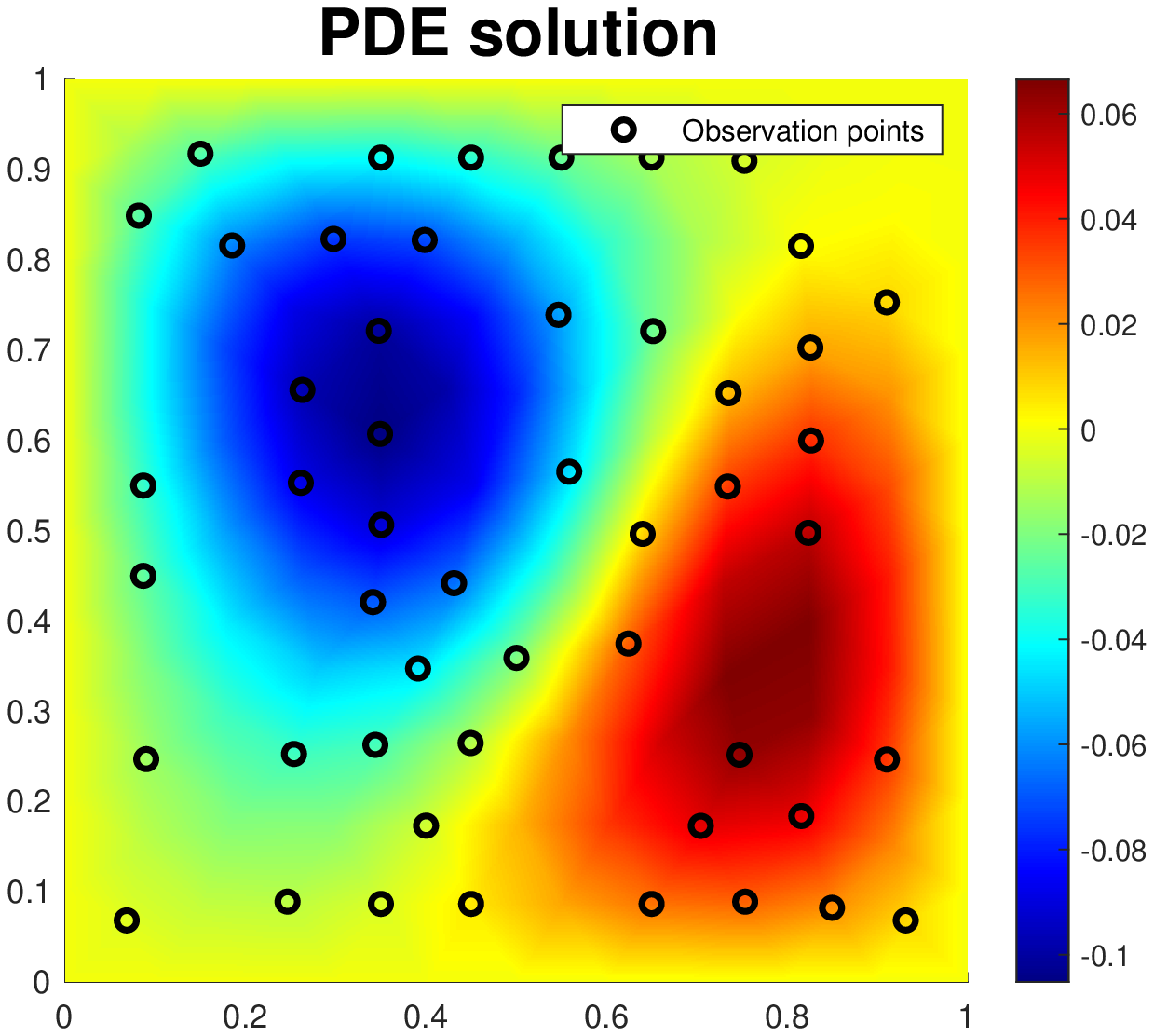}
	\end{subfigure}
    \caption{Ground truth (left hand side) and the corresponding PDE solution (right hand side).}\label{fig:ex2_reference}
\end{figure}

In the following, we compare the neural network based one-shot formulation, solved by the EKI with Algorithm 2, to the explicit Tikhonov solution of the reduced formulation. The scaling parameter $\lambda$ in Algorithm 2 is determined by the ODE $\mathrm d\lambda/\mathrm dt=1/\lambda^2$. Figure \ref{fig:ex2_est} demonstartes that the EKI leads to a comparable solution.


\begin{figure}[!htb]
	\centering
	\begin{subfigure}[c]{0.49\textwidth}
	\includegraphics[width=1\textwidth]{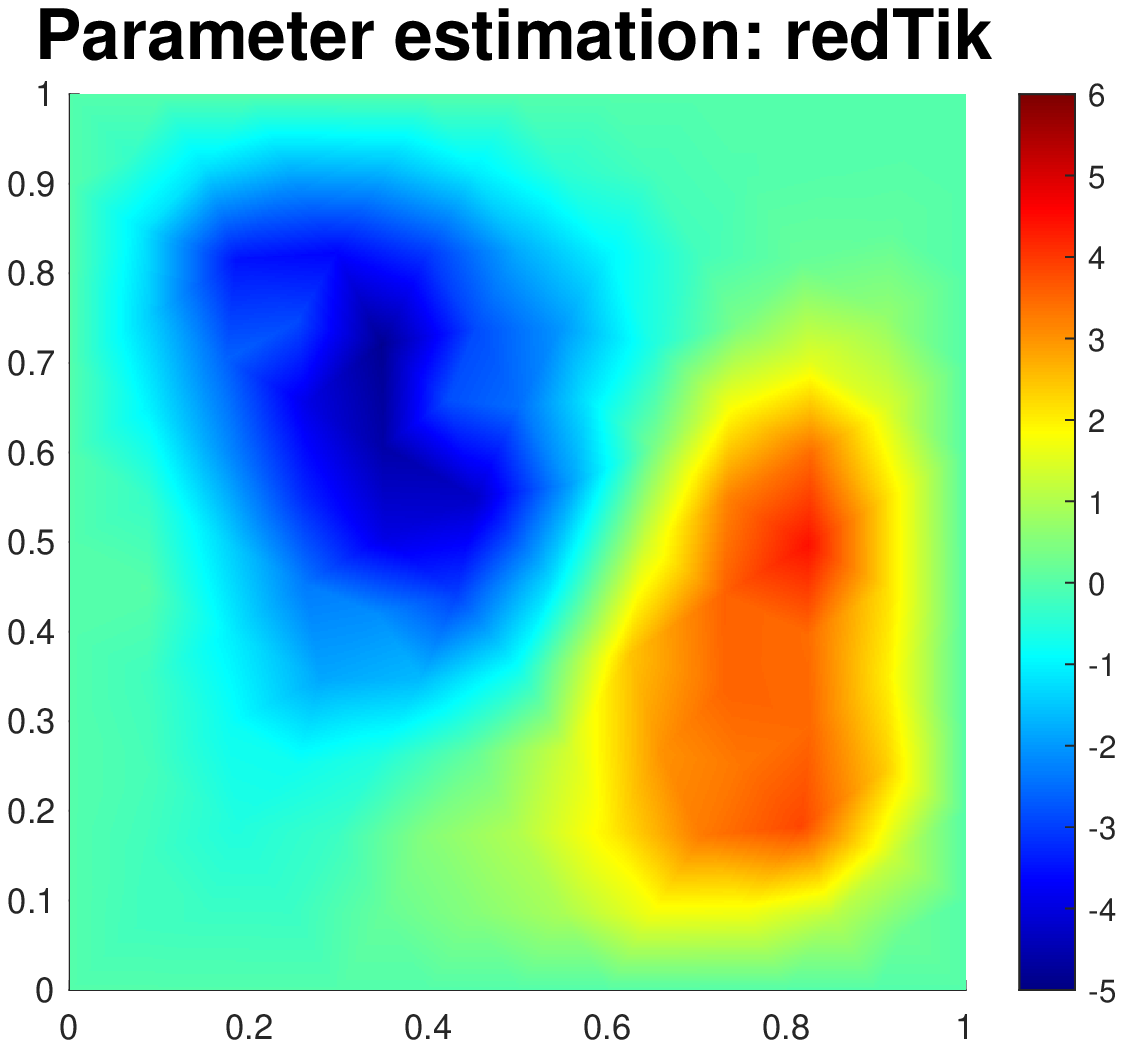}
	\end{subfigure}
	\begin{subfigure}[c]{0.49\textwidth}
	\includegraphics[width=1\textwidth]{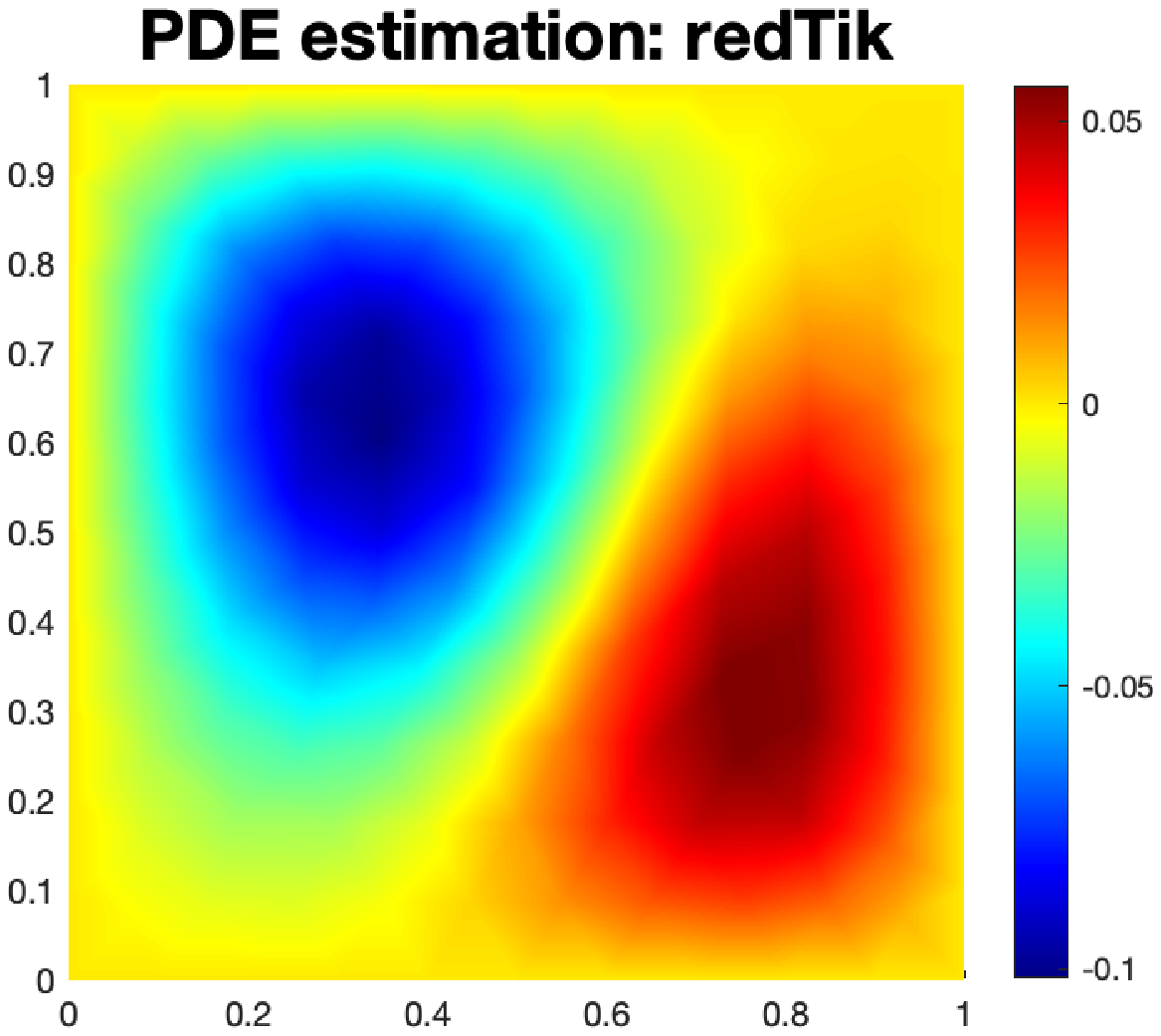}
	\end{subfigure}
	\begin{subfigure}[c]{0.49\textwidth}
	\includegraphics[width=1\textwidth]{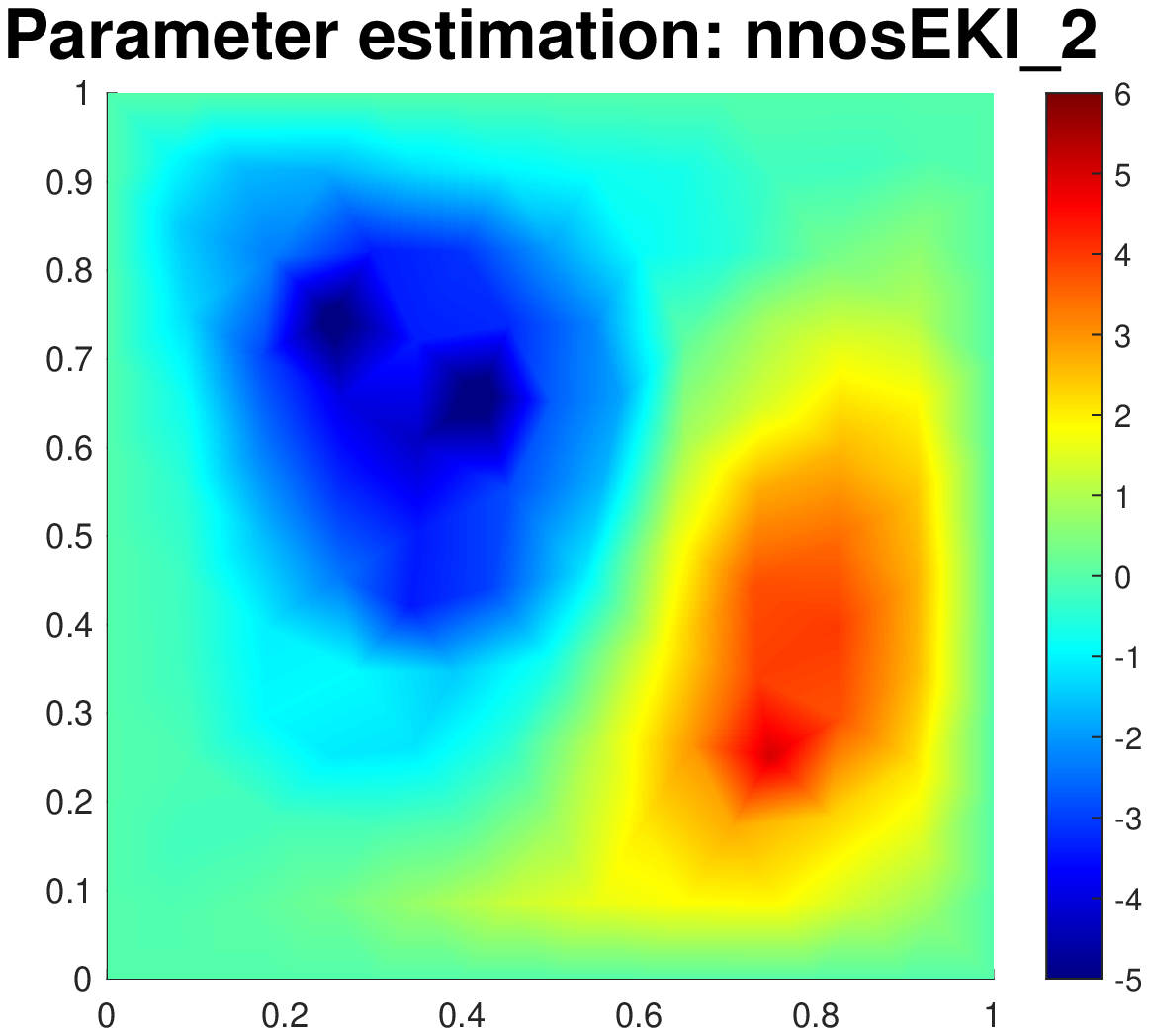}
	\end{subfigure}
	\begin{subfigure}[c]{0.49\textwidth}
	\includegraphics[width=1\textwidth]{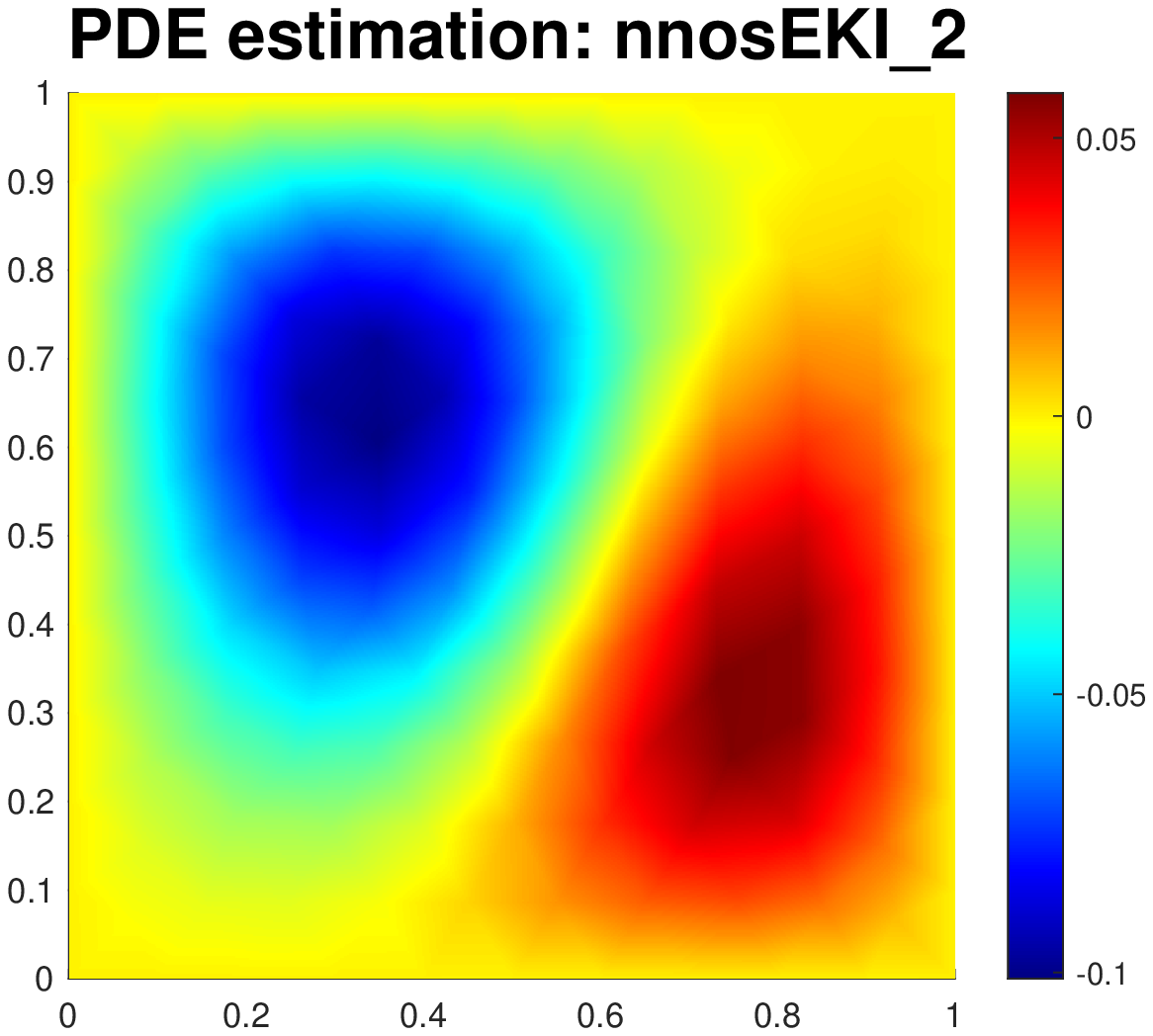}
	\end{subfigure}
    \caption{Comparison of parameter estimation given by the EKI with Algorithm 2 (osEKI\_2) (below) and the Tikhonov solution (above) (on the left hand side) and corresponding PDE solution (on the right hand side).}\label{fig:ex2_est}
\end{figure} 

The proposed approach leads to a feasible solution w.r.~to the forward problem, cp. Figure \ref{fig:ex2_datamisfit_PDEerror2}.
\begin{figure}[!htb]
	\begin{subfigure}[c]{0.49\textwidth}
	\includegraphics[width=1\textwidth]{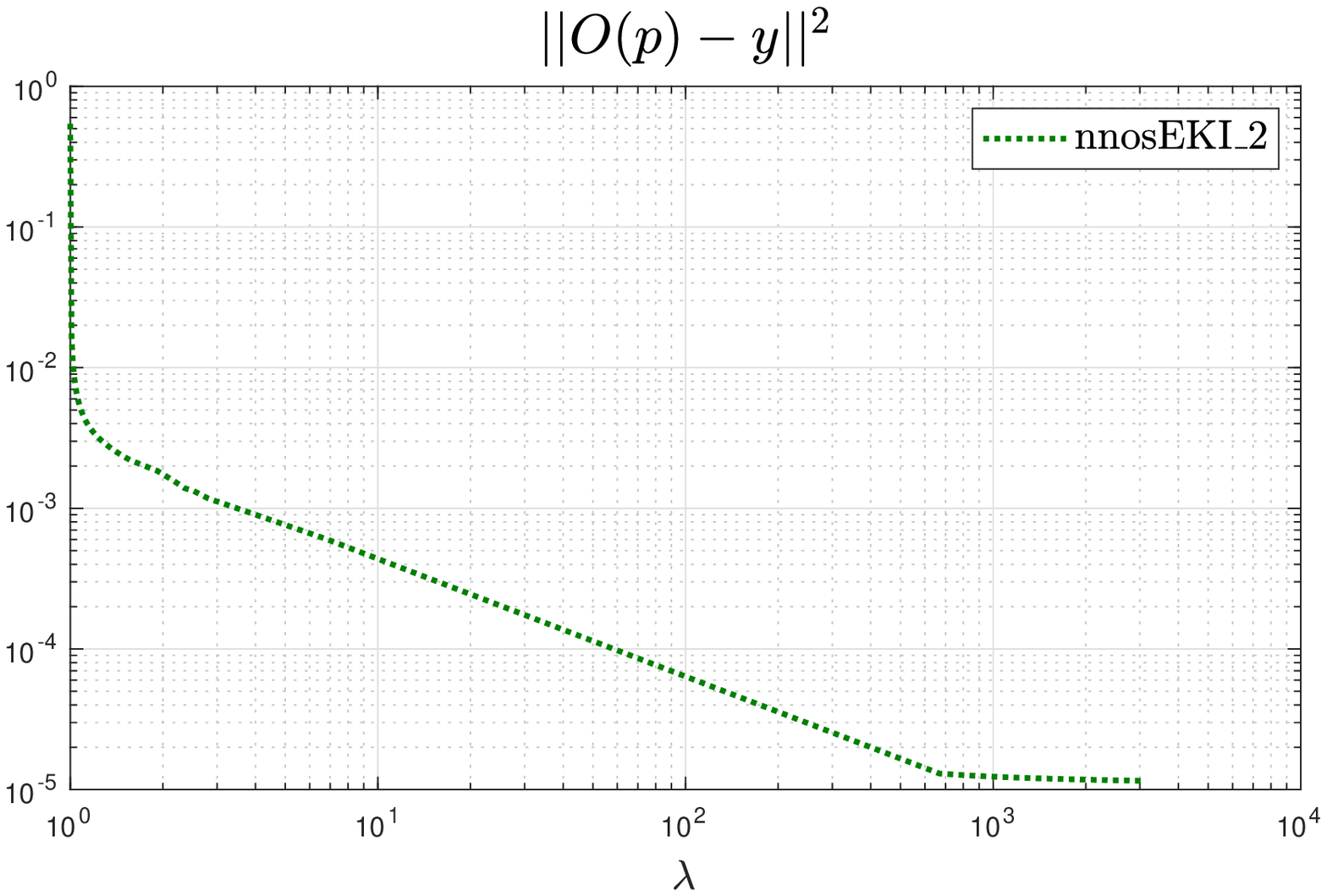}
	\end{subfigure}
	\begin{subfigure}[c]{0.49\textwidth}
	\includegraphics[width=1\textwidth]{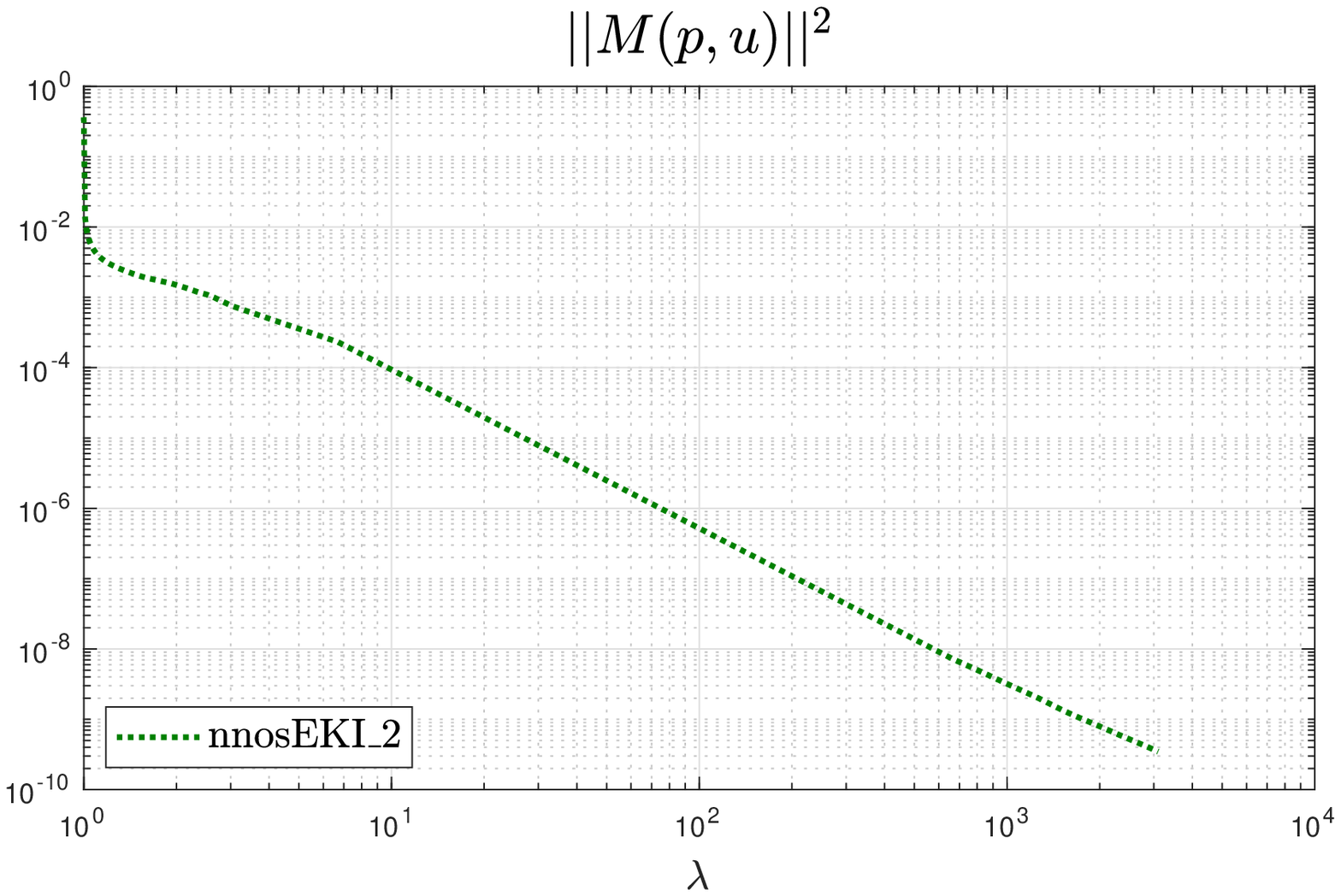}
	\end{subfigure}
    \caption{Data misfit given by the EKI with Algorithm 2 (osEKI\_2) for the neural network based one-shot inversion compared (on the left hand side) and residual of the forward problem (on the right hand side), both w.r.~to $\lambda$.}\label{fig:ex2_datamisfit_PDEerror2}
\end{figure}

\section{Conclusions}\label{sec:concl}
We have demonstrated that the ensemble Kalman inversion for neural network based one-shot inversion is a promising method, regarding  both  estimation quality of the unknown parameter  and  computational  feasibility. The link to the Bayesian setting with vanishing noise allowed to establish a convergence result in a simplified linear setting.   Several directions for future work naturally arise from the presented ideas: theoretical analysis of the neural network based one-shot inversion using recent approximation results of neural networks in the PDE setting, \textcolor{black}{in particular the comparison of the efficiency of neural network and state-of-the-art approximations of the forward model}, parallelization strategies for the EKI using localization ideas, comparison to state-of-the-art optimization methods in the machine learning community.
\subsection*{Acknowledgements}
PG and SW are grateful to the DFG RTG1953 Statistical Modeling of Complex Systems and Processes for funding of their research.

\nocite{ItoKunisch}
\bibliographystyle{plain}
\bibliography{article}

\begin{thebibliography}{10}

\bibitem{Agapiou_2018}
Sergios Agapiou, Martin Burger, Masoumeh Dashti, and Tapio Helin.
\newblock Sparsity-promoting and edge-preserving maximum a posteriori
  estimators in non-parametric {B}ayesian inverse problems.
\newblock {\em Inverse Problems}, 34(4):045002, feb 2018.

\bibitem{actaip}
Simon Arridge, Peter Maass, Ozan Öktem, and Carola-Bibiane Schönlieb.
\newblock Solving inverse problems using data-driven models.
\newblock {\em Acta Numerica}, 28:1–174, 2019.

\bibitem{bertsekas}
Dimitri~P. Bertsekas.
\newblock {\em Nonlinear Programming}.
\newblock Athena Scientific, 1999.

\bibitem{BSWW19}
Dirk Blömker, Claudia Schillings, Philipp Wacker, and Simon Weissmann.
\newblock Well posedness and convergence analysis of the ensemble {K}alman
  inversion.
\newblock {\em Inverse Problems}, 35(8):085007, jul 2019.

\bibitem{volker}
Alfio Borzi and Volker Schulz.
\newblock {\em Computational optimization of systems governed by partial
  differential equations}.
\newblock Society for Industrial and Applied Mathematics, USA, 2012.

\bibitem{CST19}
Neil~K. Chada, Andrew~M. Stuart, and Xin~T. Tong.
\newblock Tikhonov regularization within ensemble {K}alman inversion.
\newblock {\em SIAM Journal on Numerical Analysis}, 58, 01 2019.

\bibitem{chada2019convergence}
Neil~K. Chada and Xin~T. Tong.
\newblock Convergence acceleration of ensemble {K}alman inversion in nonlinear
  settings, 2019.

\bibitem{Clason}
Christian Clason, Tapio Helin, Remo Kretschmann, and Petteri Piiroinen.
\newblock Generalized modes in {B}ayesian inverse problems.
\newblock {\em SIAM/ASA Journal on Uncertainty Quantification}, 7(2):652--684,
  2019.

\bibitem{Dashti_2013}
Masoumeh Dashti, Kody J.~H. Law, Andrew~M. Stuart, and Jochen Voss.
\newblock {MAP} estimators and their consistency in {B}ayesian nonparametric
  inverse problems.
\newblock {\em Inverse Problems}, 29(9):095017, sep 2013.

\bibitem{DS17}
Masoumeh Dashti and Andrew~M. Stuart.
\newblock {\em The {B}ayesian approach to inverse problems}, pages 311--428.
\newblock Springer International Publishing, Cham, 2017.

\bibitem{BEG19}
Dennis~Maximilian Elbr\"{a}chter, Julius Berner, and Philipp Grohs.
\newblock {\em How degenerate is the parametrization of neural networks with
  the {R}e{LU} activation function?}, pages 7790--7801.
\newblock Curran Associates, Inc., 2019.

\bibitem{EHN96}
Heinz~Werner Engl, Martin Hanke, and Andreas Neubauer.
\newblock {\em Regularization of inverse problems}.
\newblock Mathematics and Its Applications. Springer Netherlands, 1996.

\bibitem{doi:10.1137/140981319}
Oliver~G. Ernst, Bj\"orn Sprungk, and Hans-J\"org Starkloff.
\newblock Analysis of the ensemble and polynomial chaos {K}alman filters in
  {B}ayesian inverse problems.
\newblock {\em SIAM/ASA Journal on Uncertainty Quantification}, 3(1):823--851,
  2015.

\bibitem{GE03}
Geir Evensen.
\newblock The ensemble {K}alman filter: theoretical formulation and practical
  implementation.
\newblock {\em Ocean Dynamics}, 53(4):343--367, Nov 2003.

\bibitem{garbunoinigo2019interacting}
Alfredo Garbuno-Inigo, Franca Hoffmann, Wuchen Li, and Andrew~M. Stuart.
\newblock Interacting {L}angevin diffusions: {G}radient structure and ensemble
  {K}alman sampler.
\newblock {\em SIAM Journal on Applied Dynamical Systems}, 19(1):412--441,
  2020.

\bibitem{Goodfellow-et-al-2016}
Ian Goodfellow, Yoshua Bengio, and Aaron Courville.
\newblock {\em Deep Learning}.
\newblock MIT Press, 2016.
\newblock \url{http://www.deeplearningbook.org}.

\bibitem{HLR18}
Eldad Haber, Felix Lucka, and Lars Ruthotto.
\newblock Never look back - {A} modified {E}n{KF} method and its application to
  the training of neural networks without back propagation.
\newblock {\em arXiv preprint arXiv:1805.08034}, 2018.

\bibitem{Helin_2015}
Tapio Helin and Martin Burger.
\newblock Maximum a posteriori probability estimates in infinite-dimensional
  {B}ayesian inverse problems.
\newblock {\em Inverse Problems}, 31(8):085009, jul 2015.

\bibitem{HSZ20}
Lukas Herrmann, Christoph Schwab, and Jakob Zech.
\newblock Deep {R}e{LU} neural network expression rates for data-to-{Q}o{I}
  maps in {B}ayesian {PDE} inversion.
\newblock Technical Report 2020-02, Seminar for Applied Mathematics, ETH
  Z{\"u}rich, Switzerland, 2020.

\bibitem{Higdon}
Dave Higdon, Marc Kennedy, James~C. Cavendish, John~A. Cafeo, and Robert~D.
  Ryne.
\newblock Combining field data and computer simulations for calibration and
  prediction.
\newblock {\em SIAM J. Sci. Comput.}, 26(2):448?466, February 2005.

\bibitem{ILS13}
Marco~A. Iglesias, Kody J.~H. Law, and Andrew~M. Stuart.
\newblock Ensemble {K}alman methods for inverse problems.
\newblock {\em Inverse Problems}, 29(4):045001, 2013.

\bibitem{ItoKunisch}
Kazufumi Ito and Karl Kunisch.
\newblock {\em Lagrange multiplier approach to variational problems and
  applications}, volume~15 of {\em Advances in Design and Control}.
\newblock Society for Industrial and Applied Mathematics (SIAM), Philadelphia,
  PA, 2008.

\bibitem{KS10}
Jari Kaipio and Erkki Somersalo.
\newblock {\em Statistical and computational inverse problems}.
\newblock Applied mathematical sciences; Volume 160. Springer Science \&
  Business Media, New York, NY, 2010.

\bibitem{K16}
Barbara Kaltenbacher.
\newblock Regularization based on all-at-once formulations for inverse
  problems.
\newblock {\em SIAM J. Numerical Analysis}, 54:2594--2618, 2016.

\bibitem{K17}
Barbara Kaltenbacher.
\newblock All-at-once versus reduced iterative methods for time dependent
  inverse problems.
\newblock {\em Inverse Problems}, 33(6):064002, may 2017.

\bibitem{Kaltenbacher2008IterativeRM}
Barbara Kaltenbacher, Andreas Neubauer, and Otmar Scherzer.
\newblock Iterative regularization methods for nonlinear ill-posed problems.
\newblock In {\em Radon Series on Computational and Applied Mathematics}, 2008.

\bibitem{OHagan}
Marc~C. Kennedy and Anthony O'Hagan.
\newblock Bayesian calibration of computer models.
\newblock {\em Journal of the Royal Statistical Society: Series B (Statistical
  Methodology)}, 63(3):425--464, 2001.

\bibitem{KS19}
Nikola~B. Kovachki and Andrew~M. Stuart.
\newblock Ensemble {K}alman inversion: a derivative-free technique for machine
  learning tasks.
\newblock {\em Inverse Problems}, 35(9):095005, aug 2019.

\bibitem{KPRS19}
Gitta Kutyniok, Philipp Petersen, Mones Raslan, and Reinhold Schneider.
\newblock A theoretical analysis of deep neural networks and parametric {PDE}s.
\newblock {\em arXiv preprint arXiv:1904.00377}, 2019.

\bibitem{OPS19}
Joost A.~A. Opschoor, Philipp~C. Petersen, and Christoph Schwab.
\newblock Deep {R}e{LU} networks and high-order finite element methods.
\newblock {\em Analysis and Applications}, 12 2019.

\bibitem{RPK17}
Maziar Raissi, Paris Perdikaris, and George~Em Karniadakis.
\newblock Physics informed deep learning ({P}art {I}): {D}ata-driven solutions
  of nonlinear partial differential equations.
\newblock {\em arXiv preprint arXiv:1711.10561}, 2017.

\bibitem{RPK17_2}
Maziar Raissi, Paris Perdikaris, and George~Em Karniadakis.
\newblock Physics informed deep learning ({P}art {II}): {D}ata-driven discovery
  of nonlinear partial differential equations.
\newblock {\em arXiv preprint arXiv:1711.10566}, 2017.

\bibitem{2019JCoPh.378..686R}
Maziar {Raissi}, Paris {Perdikaris}, and George~Em {Karniadakis}.
\newblock {Physics-informed neural networks: A deep learning framework for
  solving forward and inverse problems involving nonlinear partial differential
  equations}.
\newblock {\em Journal of Computational Physics}, 378:686--707, February 2019.

\bibitem{SS17}
Claudia Schillings and Andrew~M. Stuart.
\newblock Analysis of the ensemble {K}alman filter for inverse problems.
\newblock {\em SIAM Journal on Numerical Analysis}, 55(3):1264--1290, 2017.

\bibitem{SS17b}
Claudia Schillings and Andrew~M. Stuart.
\newblock Convergence analysis of ensemble {{K}alman} inversion: the linear,
  noisy case.
\newblock {\em Applicable Analysis}, 97(1):107--123, 2018.

\bibitem{S12}
Christoph Schwab.
\newblock {QMC} {G}alerkin discretization of parametric operator equations.
\newblock In Josef Dick, Frances~Y. Kuo, Gareth~W. Peters, and Ian~H. Sloan,
  editors, {\em Monte Carlo and Quasi-Monte Carlo Methods 2012}, pages
  613--629, Berlin, Heidelberg, 2013. Springer Berlin Heidelberg.

\bibitem{SZ19}
Christoph Schwab and Jakob Zech.
\newblock Deep learning in high dimension: {N}eural network expression rates
  for generalized polynomial chaos expansions in {UQ}.
\newblock {\em Analysis and Applications}, 17(01):19--55, 2019.

\bibitem{SDK2020}
Yeonjong {Shin}, Jerome {Darbon}, and George~Em {Karniadakis}.
\newblock {On the convergence and generalization of physics informed neural
  networks}.
\newblock {\em arXiv preprint arXiv:2004.01806}, 2020.

\bibitem{AMS10}
Andrew~M. Stuart.
\newblock Inverse problems: {A} {B}ayesian perspective.
\newblock {\em Acta Numerica}, 19:451–559, 2010.

\bibitem{YMK20}
Liu Yang, Xuhui Meng, and George~Em Karniadakis.
\newblock {B}-{PINN}s: {B}ayesian physics-informed neural networks for forward
  and inverse {PDE} problems with noisy data.
\newblock {\em arXiv preprint arXiv:2003.06097v1}, 2020.

\bibitem{Y17}
Dmitry Yarotsky.
\newblock Error bounds for approximations with deep {R}e{LU} networks.
\newblock {\em Neural Networks}, 94:103--114, 2017.

\end{thebibliography}

\end{document}